\documentclass[12pt,reqno]{amsart}
\usepackage{latexsym, amssymb, amscd, mathrsfs,geometry, amsmath, rotating,amsthm,color}

\topmargin -.8cm \textheight 22.8cm \oddsidemargin 0cm \evensidemargin -0cm \textwidth 16.3cm
\newtheorem{thm}{Theorem}[section]
\newtheorem{lem}[thm]{Lemma}
\newtheorem{prop}[thm]{Proposition}
\newtheorem{cor}[thm]{Corollary}
\newtheorem{Def}{Definition}[section]
\newtheorem{ex}{Example}[section]

\newtheorem{rem}{Remark}

\def\a{\alpha}
\def\b{\beta}

\def\la{\lambda}
\def\Fg{\mathfrak{g}}
\def\Fgl{\mathfrak{gl}}
\def\Fsl{\mathfrak{sl}}
\def\Fp{\mathfrak{p}}
\def\Fh{\mathfrak{h}}
\def\CC{\mathbb{C}}
\def\ZZ{\mathbb{Z}}
\def\U{\mathcal{U}}
\def\Ind{\mathrm{Ind}}
\def\Supp{\mathrm{Supp}}
\def\deg{\mathrm{deg}}

\def\h{\mathrm{ht}}
\def\Span{\mathrm{Span}}

\def\rank{\mathrm{rank}}

\def\W{\mathcal{W}}
\def\Ba{\boldsymbol{\mathrm{a}}}
\def\beqn{\begin{equation*}}
	\def\eeqn{\end{equation*}}
\def\beq{\begin{equation}}
	\def\eeq{\end{equation}}
\def\({\left(}
\def\){\right)}
\newcounter{Par}[subsection]

\counterwithin{equation}{section}

\title{Quasi-Whittaker modules}
\author{Cunguang Cheng, Wenting Gao, Shiyuan Liu,  Kaiming Zhao, Yueqiang Zhao}

\date{}
\begin{document}
	\maketitle
	\begin{abstract}
In this paper, a general setting is proposed to define  a class of modules over  nonsemisimple Lie algebras $\mathfrak{g}$ induced by a nonperfect ideal $\mathfrak{p}$. This class of Lie algebras includes   many well-known Lie algebras, and  some of this class  of modules are Whittaker modules and others are not. We call these modules quasi-Whittaker modules.
By introducing a new concept: the Whittaker annihilator for   universal quasi-Whittaker modules, we are able to determine  the necessary and sufficient conditions for the irreducibility of the universal quasi-Whittaker modules. In the reducible case, we can obtain some maximal submodules. In particular, we classify the irreducible quasi-Whittaker modules for many Lie algebras, and obtain a lot of irreducible  smooth $\W_n^+$-modules of height $2$. 
	
\end{abstract}

Keyword: quasi-Whittaker module, smooth module, quasi-Whittaker vector

2020 MCS: 17B10, 17B20, 17B65, 
	
\section{Introduction}
Whittaker modules are a class of  important   modules in representation theory that were first defined for $\mathfrak{sl}_{2}$ by Arnal and Pinzcon \cite{AG}. In \cite{K}, B. Kostant formally  introduced Whittaker modules for finite-dimensional complex semisimple Lie algebras,  which can be generalized to any Lie algebras with a triangular decomposition. Let $\mathfrak{g}=\mathfrak{n}_{-}\oplus\mathfrak{h}\oplus\mathfrak{n}_{+}$ be a complex semisimple Lie algebra and $\eta:\mathfrak{n}_{+}\to\mathbb{C}$ a Lie algebra homomorphism. A $\mathfrak{g}$-module is a Whittaker module if $x - \eta(x)$ acts locally nilpotently on it for any $x\in\mathfrak{n}_{+}$. Various classes of simple Whittaker modules in the above sense were studied and classified by many authors (see \cite{B,K,M1,M2,MS}).

Analogues of Whittaker modules can be defined and studied in various similar scenarios, as seen in the following   papers. Whittaker modules (or generalized Whittaker modules) have been explored over the Virasoro algebra and its various generalizations (see \cite{OW,LGZ,LPX, LZ}), the affine Kac-Moody algebras (see \cite{ALZ, CGLW}), the Lie algebra of polynomial vector fields (see \cite{ZL}),  the classical Lie superalgebras (see \cite{C}), and even for quantum groups like $\mathcal{U}_q(\mathfrak{sl}_n)$ (see \cite{O,XGZ, XZ}).  

A general setup of \emph{Whittaker pair}   which one can define  Whittaker modules was proposed in \cite{BM}, see also \cite{MZ}. A Whittaker pair $(\mathfrak{g}, \mathfrak{n})$ consists of a Lie algebra $\mathfrak{g}$ and a locally nilpotent subalgebra $\mathfrak{n}$ of $\mathfrak{g}$ such that the action of $\mathfrak{n}$ on the adjoint $\mathfrak{n}$-module $\mathfrak{g}/\mathfrak{n}$ is locally nilpotent.
In \cite{CCS}, Y. Cai et al, studied a type of Whittaker modules over the Schrodinger algebra induced by the Heisenberg subalgebra. To distinguish from the case of the Borel subalgebra and parabolic subalgebra, they named these modules quasi-Whittaker modules.  Subsequently, quasi-Whittaker modules over the Euclidean algebra $\mathfrak{e}(3)$, the conformal Galilei algebras, the $(2 + 1)$- dimensional spacetime Schrödinger algebra, and the $n$-th Schrödinger algebra have all been studied (see \cite{CSZ,CC,CL,JLZ,CW}). In particular, the above-mentioned  works exhibit a certain degree of uniformity. Inspired by this, we will uniformly study a class of modules over  nonsemisimple Lie algebras $\mathfrak{g}$ induced by a nonperfect ideal $\mathfrak{p}$. We still call them quasi-Whittaker modules. \ {Generally speaking, if $\mathfrak{p}$ is not locally nilpotent, the pair $(\mathfrak{g}, \mathfrak{p})$ does not form a \emph{Whittaker pair}.} The purpose of this paper is  to set up a uniform way to study this class of modules for many Lie algebras, including finding quasi-Whittaker vectors, studying the irreducibility of universal quasi-Whittaker modules, and obtaining a specific class of irreducible quasi-Whittaker modules. 

The paper is arranged as follows. In Section 2,  we define quasi-Whittaker modules over nonssemisimple Lie algebras with a nonperfect ideal,
and establish some elementary results associated to quasi-Whittaker modules for later use. In Section 3,  we introduce the new concept: the Whittaker annihilator for   universal quasi-Whittaker modules,    determine all quasi-Whittaker vectors (see Theorem \ref{T3.1}) and the irreducibilities of the universal quasi-Whittaker modules (see Theorem \ref{T3.5}). For the reducible cases, we determine some irreducible quotients (see Lemma \ref{L3.4} and Theorem \ref{T3.11}). Moreover, a sufficient condition is given for classifying irreducible quasi-Whittaker modules (see Theorem \ref{T3.10}). In Section 4, we give some examples and applications, to recover many known results and to provide many new irreducible modules. In particular, we obtain a lot of irreducible  smooth $\W_n^+$-modules of height $2$. 

Throughout this paper, we use 
$\mathbb{Z}$ to denote the set of integers, $\mathbb{N}$
  for positive integers,
$\mathbb{Z}_{+}$
  for nonnegative integers, and 
$\mathbb{C}$ for complex numbers.  All vector spaces are over $\mathbb C$.
\section{Quasi-Whittaker modules}

  For a totally ordered set  $I$ (finite, countable or uncountable), denote  by
\beqn
\ZZ_{+}^{I}=\{\a: I\rightarrow \ZZ_+\mid \a(i)\neq0~\text{only for finitely many  }i\in I\}.
\eeqn
For $\a\in\ZZ_+^{I}$, we write $\a_i=\a(i)$ for each $i\in I$. Define the {\bf size} of  $\a$ to be
\beqn
|\a|=\sum_{i\in I}\a_i.
\eeqn
For $i\in I$, denote by $\varepsilon_i\in\ZZ^I_+$  the element such that
\beqn
\varepsilon_{ij}=\delta_{ij},~\forall j\in I.
\eeqn
For $\a\in\ZZ_+^I$ we use  the notation
\beqn
	x^{\a}=x_{i_1}^{\a_{i_1}}x_{i_2}^{\a_{i_2}}\cdots x_{i_n}^{\a_{i_n}},
\eeqn
where $i_1<i_2<\cdots<i_n$ are chosen such that 
\beqn
\a_k=0,~\forall k\in I\backslash\{i_1,i_2,\cdots,i_n\}.
\eeqn
Here, $x^0$ is viewed as $1$. 

Define a total order ~``$>$" on $\ZZ_+^I$ by
\begin{equation*}\label{xu}
	\a>\b\Longleftrightarrow |\a|>|\b|,~\text{or } |\a|=|\b|~ \text{and }\exists  i~\text{such that}~\a_i>\b_i \text{ and } \a_j=\b_j\text{ for } j<i.
\end{equation*}

Let $\a\in\ZZ_+^I$ with $|\a|>0$.  Denote by $\h(\a)$ the largest index $k\in I$  such that $\a_k\neq 0$, called the {\bf height} of $\a$. We write
\beqn
\hat{\a}=\a-\varepsilon_{\h(\a)}.
\eeqn
The following fact is easy to verify.
\begin{lem}\label{NL2.1}
	Let $\a,\b\in\ZZ^I_+$  with positive size such that  $\a>\b$. Then $\hat{\a}\ge\hat{\b}$. Moreover, the equality holds if and only if  there exists some $j>\h(\a)$ such that $\b=\hat{\a}+\varepsilon_j$.
\end{lem}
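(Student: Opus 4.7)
The plan is a case analysis, splitting first on whether $|\a|>|\b|$ or $|\a|=|\b|$, and in the equal-size case on the position of the smallest index $i$ at which $\a$ and $\b$ differ relative to $k:=\h(\a)$. The strict-size branch is immediate: $|\hat\a|=|\a|-1\ge|\b|>|\hat\b|$ forces $\hat\a>\hat\b$ strictly, so equality cannot occur there. I would dispose of this in one line and focus on the equal-size case.

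In the equal-size case the smallest differing index $i$ must satisfy $i\le k$, since $\a_i>\b_i\ge 0$ forces $\a_i\ne 0$ while $\a_j=0$ for $j>k$. Setting $l:=\h(\b)$, I plan three subcases. First, $i<k$: for $j<i$ the coordinates of $\hat\a$ and $\hat\b$ coincide except possibly at $j=l$, where $\hat\a_l>\hat\b_l$; and at $j=i$, $\hat\a_i=\a_i>\b_i\ge\hat\b_i$, giving $\hat\a>\hat\b$ strictly. Second, $i=k=l$: the coordinates agree for $j<k$ and $\hat\a_k=\a_k-1>\b_k-1=\hat\b_k$, again strict. Third, $i=k<l$: one has $\hat\a_k=\a_k-1\ge\b_k=\hat\b_k$, and if $\a_k-\b_k\ge 2$ this is strict; otherwise $\a_k-\b_k=1$, and the constraints $|\a|=|\b|$, $\a_j=\b_j$ for $j<k$, and $\a_j=0$ for $j>k$ force $\sum_{j>k}\b_j=1$, so $\b$ has a single nonzero entry $\b_l=1$ at $l>k$. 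This is exactly the equality configuration $\b=\hat\a+\varepsilon_l$, and one reads off $\hat\b=\hat\a$.

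The converse is a direct verification: for $\b=\hat\a+\varepsilon_j$ with $j>\h(\a)$, the top nonzero slot of $\b$ is $j$, so $\h(\b)=j$ and $\hat\b=\hat\a$, while $\a$ and $\b$ first disagree at $k=\h(\a)$ with $\a_k=\b_k+1$, giving $\a>\b$. The main obstacle, such as it is, is purely bookkeeping: tracking how the map $\gamma\mapsto\hat\gamma$, which decrements a $\gamma$-dependent slot, interacts with the lex-style comparison. None of the individual steps is deep; the key observation is that equality can arise only when $\b$ carries a single ``tail'' entry beyond $\h(\a)$ whose removal reconciles $\hat\b$ with $\hat\a$.
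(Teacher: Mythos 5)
The paper states this lemma as ``easy to verify'' and provides no proof, so there is no official argument to compare against. Your case split on $|\a|>|\b|$ versus $|\a|=|\b|$, and in the latter case on the first differing index $i$ relative to $k=\h(\a)$ and $l=\h(\b)$, is correct and isolates the unique equality configuration $\b=\hat{\a}+\varepsilon_l$ with $l>\h(\a)$, and the converse verification is sound. Two of the situations you entertain are, however, vacuous: in the equal-size case one always has $l>i$, since $\sum_{j>i}\b_j=(\a_i-\b_i)+\sum_{j>i}\a_j>0$; this rules out your subcase $i=k=l$ and also the possibility $l<i$ that the clause ``except possibly at $j=l$'' in your first subcase guards against. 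Noting this would let you collapse the analysis to the two genuine subcases $i<k$ and $i=k<l$, but as written the proof is nonetheless correct.
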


In the following, let $\Fg$ be nonsemisimple Lie algebra and  $\mathfrak{p}$ a nonperfect ideal  of $\Fg$. 
\begin{Def}
	Let $\phi:\Fp\rightarrow\CC$ be any Lie algebra homomorphism and $V$  a $\Fg$-module. 
	\begin{itemize}
		\item[(1)] A  vector $v\in V$  is called a \textbf{quasi-Whittaker vector of type $\phi$} if $pv=\phi(p)v$ for each $p\in\Fp$.
		
		\item[(2)] $V$ is said to be a \textbf{quasi-Whittaker module of type $\phi$} if $V$ is generated by a quasi-Whittaker vector of type $\phi$.
	\end{itemize}
\end{Def}

Note that $\phi=0$ if  $\mathfrak{p}$ is a perfect ideal  of $\Fg$. So we assume that  $\mathfrak{p}$ is a nonperfect ideal  of $\Fg$ and $\phi\ne0$.
Let $\phi:\Fp\rightarrow\CC$ be a given nonzero Lie algebra homomorphism. We define a one dimensional $\Fp$-module $\CC w_{\phi}$ by 
\beq\label{F2.1}
pw_{\phi}=\phi(p)w_{\phi},~\forall p\in\Fp.
\eeq
Denote by
\beqn
W(\phi)=\Ind_{\Fp}^{\Fg}\CC w_{\phi}
\eeqn
the corresponding induced module. Then $W(\phi)$ is a quasi-Whittaker module of type $\phi$ with a cyclic quasi-Whittaker vector $1\otimes w_{\phi}$. We write  $w_{\phi}=1\otimes w_{\phi}$ for convenience. 
For any quasi-Whittaker module $V$ generated by a quasi-Whittaker vector $v$ of type $\phi$, there exists a surjective $\Fg$-module homomorphism $\Phi:W(\phi)\rightarrow V$ such that $\Phi(w_{\phi})=v$.  We call $W(\phi)$ \textbf{the universal quasi-Whittaker module of type $\phi$}.

\begin{Def} We define the {\bf Whittaker annihilator} of $\phi$ as 
\beq\label{F2.2}
\Fg^{\phi}=\{g\in\Fg\mid \phi([g,p])=0,~\forall p\in\Fp\}.
\eeq
\end{Def}

\begin{lem}\label{L2.1}
	$\Fg^{\phi}$ is a subalgebra of $\Fg$ containing $\Fp$. 
\end{lem}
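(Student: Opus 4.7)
The plan is to verify the two required properties of $\mathfrak{g}^{\phi}$ separately: that it contains $\mathfrak{p}$, and that it is closed under the Lie bracket (linearity is immediate from bilinearity of the bracket and linearity of $\phi$).

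For the inclusion $\mathfrak{p}\subseteq\mathfrak{g}^{\phi}$, I would take an arbitrary $p'\in\mathfrak{p}$ and note that for every $p\in\mathfrak{p}$ the bracket $[p',p]$ lies in $\mathfrak{p}$ (this uses only that $\mathfrak{p}$ is a subalgebra). Since $\phi\colon\mathfrak{p}\to\mathbb{C}$ is a Lie algebra homomorphism into the abelian Lie algebra $\mathbb{C}$, one has $\phi([p',p])=[\phi(p'),\phi(p)]=0$. Hence $p'\in\mathfrak{g}^{\phi}$.

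For closure under the bracket, take $g_1,g_2\in\mathfrak{g}^{\phi}$ and an arbitrary $p\in\mathfrak{p}$. I would apply the Jacobi identity in the form
\beqn
[[g_1,g_2],p]=[g_1,[g_2,p]]-[g_2,[g_1,p]].
\eeqn
Because $\mathfrak{p}$ is an ideal of $\mathfrak{g}$, both $[g_1,p]$ and $[g_2,p]$ belong to $\mathfrak{p}$. Then the defining property of $\mathfrak{g}^{\phi}$ applied to $g_1$ (with the element $[g_2,p]\in\mathfrak{p}$) gives $\phi([g_1,[g_2,p]])=0$, and similarly $\phi([g_2,[g_1,p]])=0$. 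Applying $\phi$ to the Jacobi identity yields $\phi([[g_1,g_2],p])=0$, so $[g_1,g_2]\in\mathfrak{g}^{\phi}$.

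There is no real obstacle here: the argument is a short calculation that uses (i) $\phi$ being a Lie algebra homomorphism into an abelian target, (ii) $\mathfrak{p}$ being an ideal (this is exactly what makes $[g_i,p]\in\mathfrak{p}$, allowing the defining condition to be applied), and (iii) the Jacobi identity. The only point worth flagging is that both the ideal hypothesis on $\mathfrak{p}$ and the homomorphism property of $\phi$ are genuinely used; one cannot weaken $\mathfrak{p}$ to a mere subalgebra without losing step two.
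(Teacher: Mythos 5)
Your proof is correct and takes essentially the same route as the paper: both verify closure under the bracket via the Jacobi identity together with the ideal property of $\mathfrak{p}$, and both deduce $\mathfrak{p}\subseteq\mathfrak{g}^{\phi}$ from $\phi([\mathfrak{p},\mathfrak{p}])=0$. Your write-up is slightly more explicit about where the ideal hypothesis on $\mathfrak{p}$ is actually used, which the paper leaves implicit in the term $\phi([[g_1,p],g_2])$.
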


\begin{proof}
	It is clear that $0\in\Fg^{\phi}$.  For  $g_1,g_2\in\Fg^{\phi}$ and $\la_1,\la_2\in\CC$, we have
	\begin{align*}
		&\phi([\la_1g_1+\la_2g_2,p])=\la_1\phi([g_1,p])+\la_2\phi([g_2,p])=0+0=0,\\
		&\phi([[g_1,g_2],p])=\phi([[g_1,p],g_2])+\phi([g_1,[g_2,p]])=0+0=0
	\end{align*}
	for each $p\in\Fp$.  It follows that 
	$\la_1g_1+\la_2g_2,[g_1,g_2]\in \Fg^{\phi}.$ 
	Hence, $\Fg^\phi$ is a subalgebra of $\Fg$.  Since $\phi$ is a Lie algebra homomorphism, one has $\phi([\Fp,\Fp])=0$, which implies that $\Fp\subseteq\Fg^{\phi}$.
\end{proof}

 In the following discussion, we choose a basis of $\Fg$ in the following form
\beq\label{F2.3}
\underbrace{x_i,~i\in I;~ \underbrace{y_j,~j\in J;~\underbrace{p_k,~k\in K}_{\text{a basis of }\Fp}}_{\text{a basis of }\Fg^{\phi}}}_{\text{a basis of} ~\Fg},
\eeq
where $I,J$ and $K$ are totally ordered sets (or empty). We denote by $\U(\Fg),\U(\Fg^{\phi})$ and $\U(\Fp)$ the universal  enveloping algebras of $\Fg,\Fg^{\phi}$ and $\Fp$ respectively. Then there exist natural inclusions
\beq\label{F2.4}
\U(\Fp)\subseteq \U(\Fg^{\phi})\subseteq \U(\Fg).
\eeq
Moreover, by PBW theorem, $\U(\Fg)$ is a free right $\U(\Fg^{\phi})$-module with basis
\beqn
x^{\a},~\a\in\ZZ_+^I,
\eeqn
and is a free right $\U(\Fp)$-module with basis
\beqn
x^\a y^{\b},~\a\in\ZZ_+^I,~\b\in\ZZ_+^J.
\eeqn
 For a $\Fg$-module  $V$, we write 
\beqn
V_{\phi}=\{v\in V\mid pv=\phi(p)v,~\forall p\in\Fp\}.
\eeqn
Then $V_{\phi}$ is a subspace of $V$. Moreover, we have

\begin{lem}\label{L2.2} For a $\Fg$-module  $V$, the subspace 
	$V_{\phi}$ is $\Fg^\phi$-invariant. 
\end{lem}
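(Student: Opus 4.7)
The plan is to verify directly that $gv\in V_\phi$ whenever $v\in V_\phi$ and $g\in\mathfrak{g}^\phi$. So I fix such $v$ and $g$, take an arbitrary $p\in\mathfrak{p}$, and aim to show $p(gv)=\phi(p)(gv)$.

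The first step is the standard rewriting
\[
p(gv)=g(pv)+[p,g]v.
\]
Because $v\in V_\phi$, the first summand equals $g(\phi(p)v)=\phi(p)(gv)$, which is exactly the desired leading term. So everything reduces to showing $[p,g]v=0$.

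For this I exploit two facts together. First, $\mathfrak{p}$ is an \emph{ideal} of $\mathfrak{g}$, which forces $[p,g]\in\mathfrak{p}$; this is the place where the ideal hypothesis, rather than merely subalgebra, enters. Second, by the definition of the Whittaker annihilator in \eqref{F2.2}, $g\in\mathfrak{g}^\phi$ means $\phi([g,p])=0$, hence $\phi([p,g])=0$. Combining these, $[p,g]$ is an element of $\mathfrak{p}$ on which $\phi$ vanishes, and since $v\in V_\phi$, we get $[p,g]v=\phi([p,g])v=0$. Plugging back yields $p(gv)=\phi(p)(gv)$, which is what we wanted.

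There is no genuine obstacle here; the only subtlety worth highlighting in the write-up is the interplay between the ideal property (to land $[p,g]$ back in $\mathfrak{p}$ so that $v\in V_\phi$ can be applied to it) and the defining condition of $\mathfrak{g}^\phi$ (to ensure $\phi$ kills this bracket). Together they are exactly what is needed, which also provides a conceptual motivation for the definition of $\mathfrak{g}^\phi$: it is the largest subspace for which this argument goes through.
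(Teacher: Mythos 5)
Your proof is correct and follows essentially the same argument as the paper: rewrite $p(gv)=g(pv)+[p,g]v$, use $v\in V_\phi$ to handle the first term, and use that $[p,g]\in\mathfrak{p}$ (ideal) together with $\phi([g,p])=0$ (definition of $\mathfrak{g}^\phi$) to kill the second. Your remark spelling out where each hypothesis is used is a nice touch, but the underlying computation is identical to the paper's.
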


\begin{proof}
	Let $v\in V_\phi$ and $y\in\Fg^{\phi}$. Then we have
	\beqn
	pyv=[p,y]v+ypv=\phi([p,y])v+y\(\phi(p)v\)=0+\phi(p)yv,~\forall p\in\Fp.
	\eeqn
	It follows that $yv\in V_{\phi}$.
\end{proof}

For $v\in V_{\phi}$ and $u\in \mathcal{U}(\Fg)$, it is easy to check that
\beq\label{F2.5}
(p-\phi(p))uv=[p,u]v,~\forall p\in\Fp.
\eeq

\begin{lem}\label{L2.3}
	Assume that $|I|>0$. Let $\a\in\ZZ_+^I$  with height $k$. For a $\Fg$-module  $V$ we have
	\begin{equation*}\label{N2-F4}
		\(p-\phi(p)\)x^\a v=\a_k\phi([p,x_k])x^{\hat{\a}}v+\text{a linear combination of } x^\beta v\text{'s}~\text{with~} \beta<\hat{\a}
	\end{equation*}
	for each $p\in\Fp$ and $v\in V_{\phi}$.
\end{lem}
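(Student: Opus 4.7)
The plan is to use \eqref{F2.5} to convert $(p-\phi(p))x^\a v$ into $[p,x^\a]v$, expand $[p,x^\a]$ in PBW form, and then identify the leading term. Applying the Leibniz rule iteratively, $[p,x^\a]$ becomes a sum of terms in which one factor $x_{i_s}$ of $x^\a$ has been replaced by $[p,x_{i_s}]\in\Fp$. Because $\Fp$ is an ideal of $\Fg$, every further bracket $[\cdots[[p,x_{i_{s_1}}],x_{i_{s_2}}],\cdots,x_{i_{s_r}}]$ that arises when one transports the resulting $\Fp$-element to the right past the remaining $x$'s lies again in $\Fp$. Hence, reordering by PBW produces a decomposition $[p,x^\a]=\sum_\gamma x^\gamma q_\gamma$ with $q_\gamma\in\U(\Fp)$.

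Since $\phi$ is a Lie algebra homomorphism and $\phi([\Fp,\Fp])=0$, $\phi$ extends uniquely to an algebra homomorphism $\U(\Fp)\to\CC$. Hence for every $v\in V_\phi$ and every $q\in\U(\Fp)$ one has $qv=\phi(q)v$, so
\beqn
[p,x^\a]v=\sum_\gamma \phi(q_\gamma)\,x^\gamma v,
\eeqn
which already has the form $\sum c_\beta x^\beta v$ demanded by the lemma. It only remains to isolate the leading term.

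The terms with $|\gamma|=|\hat\a|$ must come from a single Leibniz application at some position $s$ followed by purely ``commutative'' swaps, since any iterated commutator with an $x_{i_t}$ strictly decreases the $x$-degree. These top contributions combine to $\sum_{s=1}^n \a_{i_s}\, x^{\a-\varepsilon_{i_s}}\,[p,x_{i_s}]$, and after applying $\phi$ they become $\sum_{s=1}^n \a_{i_s}\phi([p,x_{i_s}]) x^{\a-\varepsilon_{i_s}} v$. For $s=n$ (so $i_n=k$) this yields the asserted main term $\a_k\phi([p,x_k]) x^{\hat\a} v$, while for $s<n$ a direct lexicographic check gives $\a-\varepsilon_{i_s}<\hat\a$; all other $\gamma$ satisfy $|\gamma|<|\hat\a|$ and hence $\gamma<\hat\a$ automatically by the size rule of the total order. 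The main technical obstacle is the bookkeeping needed to justify that the leading PBW contribution is exactly this sum; the ideal property $[\Fg,\Fp]\subseteq\Fp$ is the essential enabler, keeping every secondary commutator in $\Fp$ so that $\phi$ may be applied at the end.
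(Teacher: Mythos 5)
Your proof is correct and follows essentially the same route as the paper: apply \eqref{F2.5} to reduce to $[p,x^\alpha]v$, expand by the Leibniz rule, use $[\Fg,\Fp]\subseteq\Fp$ to see that all commutators land in $\Fp$ and hence act by $\phi$ on $v\in V_\phi$, and isolate the leading PBW term. Your write-up merely makes explicit the bookkeeping (ideal property keeping iterated commutators in $\Fp$, the lexicographic comparison $\alpha-\varepsilon_{i_s}<\hat{\alpha}$ for $s<n$) that the paper's one-line display leaves implicit.
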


\begin{proof}
	Assume that $	x^{\a}=x_{i_1}^{\a_{i_1}}x_{i_2}^{\a_{i_2}}\cdots x_{i_k}^{\a_{i_k}}$. Then 
	\begin{equation}\label{2.6}
			(p-\phi(p))x^\a v=[p,x^\a]v=\sum_{j=1}^kx_{i_1}^{\a_{i_1}}x_{i_2}^{\a_{i_2}}\cdots [p,x_{i_j}^{\a_{i_j}}]\cdots x_{i_k}^{\a_{i_k}}
	\end{equation}

$$
=\a_k\phi([p,x_k])x^{\hat{\a}}v+\text{a linear combination of } x^\beta v\text{'s}~\text{with~} \beta<\hat{\a}.$$
\end{proof}

\begin{thm}
	Let $V$ be an irreducible $\mathfrak{g}$-module. Assume that $\mathfrak{p}$ is a   solvable ideal of $\mathfrak{g}$. Then the following two conditions are equivalent:
	\begin{enumerate}
		\item[(1)] $V$ is a locally finite $\mathfrak{p}$-module;
		\item[(2)] $V$ is a quasi-Whittaker module for $\mathfrak{g}$.
	\end{enumerate}
\end{thm}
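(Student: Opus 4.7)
The plan is to prove the two implications separately; solvability of $\Fp$ is used only for (1) $\Rightarrow$ (2), via Lie's theorem, while (2) $\Rightarrow$ (1) is essentially a PBW bookkeeping based on Lemma \ref{L2.3}.

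For (1) $\Rightarrow$ (2), I would pick any nonzero $v_0\in V$. Local finiteness of $V$ as a $\Fp$-module makes $W:=\U(\Fp)v_0$ a nonzero finite-dimensional $\Fp$-submodule. Since $\Fp$ is solvable over $\CC$, Lie's theorem produces a common eigenvector $v\in W$: there exists a linear functional $\phi:\Fp\to\CC$ with $pv=\phi(p)v$ for all $p\in\Fp$. The identity $\phi([p_1,p_2])v=(p_1p_2-p_2p_1)v=0$ shows $\phi$ vanishes on $[\Fp,\Fp]$, so $\phi$ is a Lie algebra homomorphism. Hence $v$ is a quasi-Whittaker vector of type $\phi$, and irreducibility of $V$ forces $V=\U(\Fg)v$, making $V$ a quasi-Whittaker module.

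For (2) $\Rightarrow$ (1), irreducibility of $V$ is not even needed. Let $v$ be a cyclic quasi-Whittaker vector of $V$ of type $\phi$. Using the basis of $\Fg$ in \eqref{F2.3} with the $\Fp$-basis placed last in the PBW ordering, together with $\U(\Fp)v=\CC v$, one obtains that $V$ is the span of monomials $x^\a y^\b v$ for $\a\in\ZZ_+^I$, $\b\in\ZZ_+^J$. Regrouping $x^\a y^\b$ as a single ordered monomial $z^\g$ in the complement basis $\{z_l\}=\{x_i\}\cup\{y_j\}$ of $\Fp$, the computation of Lemma \ref{L2.3} (which only uses that $[p,x_k]\in\Fp$, a consequence of $\Fp$ being an ideal of $\Fg$) applies verbatim and gives
\beqn
p\,(z^\g v)=\phi(p)\,z^\g v+\sum_{\delta} c_\delta\, z^\delta v, \qquad p\in\Fp,
\eeqn
with each $\delta$ satisfying $\Supp(\delta)\subseteq\Supp(\g)$ and $|\delta|<|\g|$; no new complement index is created during the PBW normalization of $[p,z^\g]$ because every commutator $[p',z_m]$ with $p'\in\Fp$ again lies in $\Fp$. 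Iterating this identity shows that $\U(\Fp)(z^\g v)$ is contained in the finite-dimensional subspace spanned by $\{z^\delta v:\Supp(\delta)\subseteq\Supp(\g),\ |\delta|\le|\g|\}$. Since every $w\in V$ is a finite linear combination of such $z^\g v$'s, $\U(\Fp)w$ is finite-dimensional, establishing (1).

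The main obstacle is the PBW-normalization step in (2) $\Rightarrow$ (1): one must track that after pushing all $\Fp$-factors in $[p,z^\g]$ to the right, the complement support remains inside $\Supp(\g)$ and the complement size strictly decreases, which is exactly where the ideal condition $[\Fg,\Fp]\subseteq\Fp$ is essential. Solvability of $\Fp$ plays no role here; it enters only to invoke Lie's theorem in the converse direction.
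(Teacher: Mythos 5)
Your proof is correct and follows essentially the same route as the paper: Lie's theorem for (1)$\Rightarrow$(2), and a PBW-degree argument for (2)$\Rightarrow$(1) using the fact that $\mathfrak{p}$ is an ideal so that commutators stay in $\mathfrak{p}$ and act as scalars on the cyclic vector. Your regrouping of the complement basis into a single family $\{z_\ell\}$ rather than the paper's split into $x$'s and $y$'s, and your observation that irreducibility is not needed for (2)$\Rightarrow$(1), are minor streamlinings of the same argument.
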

\begin{proof}
	$(1)\Rightarrow(2)$. Assume that $V$ is a locally finite $\mathfrak{p}$-module. Taking any nonzero $v_0\in V$, the  $\mathfrak{p}$-submodule $V_1\subseteq V$   generated by  $v_0$ is  finite-dimensional. By Lie's theorem, there exists   a common eigenvector  $w$  for all elements of $\mathfrak{p}$.  Hence, there exists a linear map $\phi:\Fp\rightarrow \CC$ such that
	\beqn
	pw=\phi(p)w,~\forall p\in\Fp.
	\eeqn
	Since $V$ is irreducible as a $\mathfrak{g}$-module, we see that $V = U(\mathfrak{g})w$. Clearly, $V$ is a quasi-Whittaker module of type $\phi$.
	
	$(2)\Rightarrow(1)$. Let $V$ be an irreducible quasi-Whittaker module generated by a  quasi-Whittaker vector $v_{\phi}$ of type $\phi$.  
	  Then any $v\in V$ can be   written as  
	\[
	v = \sum_{\substack{\alpha \in \mathbb{Z}_+^I, \beta \in \mathbb{Z}_+^J}} a_{\alpha,\beta} x^{\alpha} y^{\beta} v_{\phi}
	\]  
	with finitely many nonzero $a_{\alpha,\beta}$'s.  
	From \eqref{2.6}, one can easily decuce that
	\beqn
	\mathcal{U}(\Fp) x^{\alpha} y^{\beta} v_{\phi} \subseteq \text{Span}_{\mathbb{C}}\{ x^{\alpha'} y^{\beta} v_{\phi} \mid \alpha'\in \ZZ^I~\text{with}~\alpha'_i \leq \alpha_i,~\forall i \in I \}
	\eeqn 
	for each $\a\in\ZZ_+^I$ and $\b\in\ZZ_+^I$. In particular, 
	\beqn
	\dim  \mathcal{U}(\mathfrak{p}) x^{\alpha} y^{\beta} v_{\phi} < \infty.
	\eeqn  
	It follows that  $\dim \U(\Fp)v<\infty$  since
	\beqn
	\U(\Fp)v\subseteq \sum_{a_{\a,\b}\neq 0} \U(\Fp)x^{\a}y^{\b}v_{\phi}.
	\eeqn
	Hence, $V$ is a locally finite  $\Fp$-module.
\end{proof}
 
We also use the symbol ``$>$" to denote the total order on $\ZZ_+^I\times\ZZ_+^J$ defined by 
\begin{align*}
(\a,\b)>(\a',\b')\Longleftrightarrow&  |\a|+|\b|>|\a'|+|\b'|;~\text{or~}  |\a|+|\b|=|\a'|+|\b'|~\text{but~} \a>\a';\\
&\text{or}~|\a|+|\b|=|\a'|+|\b'|~\text{and}~\a=\a'~\text{but~} \b>\b'.
\end{align*}

	\begin{prop}\label{P2.4}
	Each nonzero quasi-Whittaker vector in $W(\phi)$ is of type $\phi$.
\end{prop}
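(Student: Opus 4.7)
The plan is to equip $W(\phi)$ with the $x$-degree filtration
\beqn
W^{[n]}=\Span\{x^\a y^\b w_\phi : |\a|\le n\},\quad n\in\ZZ_+,
\eeqn
(with $W^{[-1]}:=0$) and to prove the key claim that for every $p\in\Fp$ the operator $p-\phi(p)$ strictly lowers this filtration, i.e.\ $(p-\phi(p))W^{[n]}\subseteq W^{[n-1]}$. Once this is in hand the proposition follows at once: if $v\neq 0$ is a quasi-Whittaker vector of some type $\psi$, then $(p-\phi(p))v=(\psi(p)-\phi(p))v$; choosing $n$ minimal with $v\in W^{[n]}$ forces the scalar $\psi(p)-\phi(p)$ to vanish for every $p\in\Fp$ when $n\ge 1$, while the case $n=0$ is handled separately.

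To invoke Lemma \ref{L2.3} inside the key claim I need the vectors $y^\b w_\phi$ themselves to be quasi-Whittaker vectors of type $\phi$, and this is the preliminary step. I prove it by induction on $|\b|$: assuming $v=y^{\b-\varepsilon_j}w_\phi\in W(\phi)_\phi$ for the smallest $j$ with $\b_j>0$, a direct computation gives
\beqn
p(y_jv)=y_jpv+[p,y_j]v=\phi(p)y_jv+\phi([p,y_j])v=\phi(p)y^\b w_\phi,
\eeqn
using that $[p,y_j]\in\Fp$ (since $\Fp$ is an ideal of $\Fg$) acts on $v$ as the scalar $\phi([p,y_j])$ by inductive hypothesis, and that $\phi([p,y_j])=0$ because $y_j\in\Fg^\phi$ by our choice of basis \eqref{F2.3}.

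With the preliminary step established, Lemma \ref{L2.3} applied to the quasi-Whittaker vector $v=y^\b w_\phi$ shows that $(p-\phi(p))x^\a y^\b w_\phi$ is a linear combination of $x^{\a'}y^\b w_\phi$ with $|\a'|\le|\a|-1$, giving the key claim for $|\a|\ge 1$; for $|\a|=0$ the preliminary step gives the stronger statement $(p-\phi(p))y^\b w_\phi=0$. It then remains to treat the $n=0$ case in the conclusion: there $v$ is a linear combination of $y^\b w_\phi$'s and is therefore automatically of type $\phi$ by the preliminary step, forcing $\psi=\phi$. I do not anticipate a serious obstacle—the proof is a clean leading-degree argument whose substantive content is already packaged in Lemma \ref{L2.3}; the only care is to first promote the $y^\b w_\phi$ to elements of $W(\phi)_\phi$ so that Lemma \ref{L2.3} is applicable.
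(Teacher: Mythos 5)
Your proof is correct, and it is essentially the paper's argument in a slightly different packaging: the paper expands $w$ in the PBW basis, orders monomials by the total order on $\ZZ_+^I\times\ZZ_+^J$, and compares the coefficient of the leading term $x^\alpha y^\beta w_\phi$ in the two expressions $pw=\varphi(p)w$ and $pw=([p,u]+up)w_\phi$, while you encode the same degree-drop phenomenon as a filtration statement $(p-\phi(p))W^{[n]}\subseteq W^{[n-1]}$ and conclude by minimality of $n$. Your preliminary step that $\CC[y]w_\phi\subseteq W(\phi)_\phi$ is the easy inclusion stated just before Theorem \ref{T3.1} and is proved exactly as you do, so there is no circularity; the only minor inefficiency is that you cite the full strength of Lemma \ref{L2.3} when you only use the size bound $|\beta|\le|\hat\alpha|$ on the resulting monomials.
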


\begin{proof}
	Let $w=uw_{\phi}\in W({\phi})$ be a nonzero quasi-Whittaker vector of type $\varphi$.  Write  
	\begin{equation*}
		w=a x^{\a}y^{\beta}w_{\phi}+\text{a linear combination of } x^{\a'} y^{\beta'}w_{\phi}\text{'s}~\text{with~}(\a',\beta')<(\a,\b) ,
	\end{equation*}
	where $a \neq 0$. 
	For each $p\in\Fp$, one has
	\begin{equation*}
		pw=\varphi(p)w=a \varphi(p)x^{\a}y^{\beta}w_{\phi}+\text{a linear combination of } x^{\a'} y^{\beta'}w_{\phi}\text{'s}\\
		~\text{with~}(\a',\beta')<(\a,\b)
	\end{equation*}
	and
	\begin{align*}
		pw=\([p,u]+up\)w_{\phi}=a \phi(p)x^{\a}y^{\beta}w_{\phi}+&\text{a linear combination of } x^{\a'} y^{\beta'}w_{\phi}\text{'s}\\
		&~~~~~~~~~~~~~~~~~~~\text{with~}(\a',\beta')<(\a,\b).
	\end{align*}
	Comparing the above two equations, we can obtain that $\varphi(p)=\phi(p)$. It follows that $\varphi=\phi$.
\end{proof}

 \section{Irreducibility of quasi-Whittaker modules}
 
In this section, we first characterize all quasi-Whittaker vectors, then determine the irreducibility of the universal quasi-Whittaker modules. We also classify the irreducible quasi-Whittaker modules with a unique nonzero quasi-Whittaker vector up to a scalar.
 
 Throughout this section, we let $\Fg$ be nonsemisimple Lie algebra,  $\mathfrak{p}$ a nonperfect ideal  of $\Fg$, 
and $\phi:\Fp\rightarrow\CC$ a Lie algebra homomorphism.

 \subsection{Irreducibility of $W(\phi)$}
 
 We now determine all the quasi-Whittaker vectors  in $W(\phi)$ defined in Section 2.  
We denote by 
\beqn
\CC[y]=\Span_{\CC}\{y^{\beta}\mid \beta\in\ZZ_+^J\}\subseteq \mathcal{U}(\Fg).
\eeqn
Then $\CC[y]$ has a basis
\beqn
y^{\beta},~\beta\in\ZZ_+^J.
\eeqn
We see that
\beq\label{F3.1}
\U(\Fg^{\phi})w_{\phi}=\CC[y]w_{\phi}.
\eeq
Let $\CC[t]=\CC[t_j\mid j\in J]$ denote the polynomial algebra in variables $t_j,~j\in J$ over $\CC$.  We understand $\CC[y]=\CC[t]=\CC$ when $J=\emptyset$. Then there exists a vector space  isomorphism 
\beq\label{F3.2}
\hat{ }: \CC[y]\longrightarrow \CC[t],~y^\beta\longmapsto t^\beta,~\forall \beta\in\ZZ_{+}^J.
\eeq

Note that $W(\phi)$ has a basis
\beqn
x^{\a}y^{\beta} w_{\phi},~\a\in\ZZ_+^{I},~\beta\in\ZZ_+^{J}.
\eeqn
Hence, any nonzero $v$ in $W(\phi)$ can be uniquely written as 
\beq\label{F3.3}
v=\sum_{\a\in\ZZ_+^I}x^\a f_{\a}w_{\phi}
\eeq
with finitely many nonzero $f_\a\in\CC[y]$. 
Assume that $|I|>0$. Define the  {\bf support} of $v$ to be 
\begin{equation*}
	\Supp(v)=\{ \a\in\ZZ_+^I\mid f_{\a}\neq 0\}.
\end{equation*}
We define the 
 {\bf degree} of $v$ as
\begin{equation*}
	\deg(v)=\max\Supp(v)
\end{equation*}
with respect to the order ~``$>$" on $\ZZ_+^I$.  

\begin{thm}\label{T3.1} For  the universal quasi-Whittaker module $W(\phi)$, we have 
 $W(\phi)_{\phi}=\CC[y]w_{\phi}$.
\end{thm}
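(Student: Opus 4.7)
The plan is to prove the two inclusions separately. The direction $\CC[y]w_{\phi}\subseteq W(\phi)_{\phi}$ is immediate: by \eqref{F3.1} we have $\CC[y]w_{\phi}=\U(\Fg^{\phi})w_{\phi}$, and since $w_{\phi}\in W(\phi)_{\phi}$ while $W(\phi)_{\phi}$ is $\Fg^{\phi}$-stable by Lemma \ref{L2.2}, we obtain $\U(\Fg^{\phi})w_{\phi}\subseteq W(\phi)_{\phi}$.

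For the reverse inclusion $W(\phi)_{\phi}\subseteq \CC[y]w_{\phi}$, I will argue by contradiction. Suppose $v\in W(\phi)_{\phi}$ with $v\notin \CC[y]w_{\phi}$. Writing $v=\sum_{\a\in\ZZ_+^I} x^{\a}f_{\a}w_{\phi}$ as in \eqref{F3.3}, this forces $\a^{*}:=\deg(v)$ to satisfy $|\a^{*}|>0$; set $k^{*}=\h(\a^{*})$. Since each $f_{\a}\in\CC[y]\subseteq \U(\Fg^{\phi})$, the first inclusion gives $f_{\a}w_{\phi}\in W(\phi)_{\phi}$, so Lemma \ref{L2.3} applies with its ``$v$'' replaced by $f_{\a}w_{\phi}$; moreover the $\a=0$ summand vanishes because $(p-\phi(p))f_{0}w_{\phi}=0$. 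Expanding $0=(p-\phi(p))v$ then yields, for every $p\in\Fp$,
\[
\sum_{\a\in\Supp(v),\,|\a|>0} \Bigl(\a_{\h(\a)}\phi([p,x_{\h(\a)}])x^{\hat{\a}}f_{\a}w_{\phi} + \textrm{(lower-order }x^{\beta}f_{\a}w_{\phi}\textrm{ with }\beta<\hat{\a}\textrm{)}\Bigr)=0.
\]

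I then read off the coefficient of $x^{\hat{\a^{*}}}$. By Lemma \ref{NL2.1}, every $\a\in\Supp(v)$ with $|\a|>0$ satisfies $\hat{\a}\leq \hat{\a^{*}}$, so no strictly lower-order term $x^{\beta}$ (with $\beta<\hat{\a}$) reaches $x^{\hat{\a^{*}}}$, and only those $\a$ with $\hat{\a}=\hat{\a^{*}}$ contribute via their leading parts. Lemma \ref{NL2.1} identifies such $\a$'s as precisely $\hat{\a^{*}}+\varepsilon_{j}$ for $j=k^{*}$ (yielding $\a^{*}$) or $j>k^{*}$. The resulting equation reads
\[
\sum_{j:\,\hat{\a^{*}}+\varepsilon_{j}\in\Supp(v)} (\hat{\a^{*}}+\varepsilon_{j})_{j}\,\phi([p,x_{j}])\,f_{\hat{\a^{*}}+\varepsilon_{j}}w_{\phi}=0\quad\text{for all }p\in\Fp,
\]
with each coefficient $(\hat{\a^{*}}+\varepsilon_{j})_{j}$ a positive integer.

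To finish, I invoke the linear independence of the functionals $p\mapsto\phi([p,x_{j}])$ on $\Fp$: a relation $\sum c_{j}\phi([\cdot,x_{j}])=0$ forces $\sum c_{j}x_{j}\in \Fg^{\phi}\cap \Span\{x_{i}:i\in I\}=0$ by the choice of basis \eqref{F2.3}. A standard dual-basis argument (using surjectivity of the finite-rank map $p\mapsto(\phi([p,x_{j}]))_{j}$ onto $\CC^{r}$) then forces each $f_{\hat{\a^{*}}+\varepsilon_{j}}w_{\phi}=0$, whence $f_{\hat{\a^{*}}+\varepsilon_{j}}=0$ by the PBW basis of $W(\phi)$. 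In particular $f_{\a^{*}}=0$, contradicting $\a^{*}\in\Supp(v)$. The main obstacle is the coefficient-extraction step, where Lemma \ref{NL2.1} is essential both for ruling out stray lower-order contributions at level $\hat{\a^{*}}$ and for enumerating the contributing $\a$'s; a secondary subtlety is that the $\a=0$ term drops out only after verifying $f_{0}w_{\phi}\in W(\phi)_{\phi}$ via the first inclusion.
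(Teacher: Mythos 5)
Your proof is correct, and the overall strategy coincides with the paper's: assume a quasi-Whittaker vector $v$ has $\deg(v)=\a^*>0$, expand $(p-\phi(p))v$ via Lemma~\ref{L2.3}, use Lemma~\ref{NL2.1} to identify which $\a\in\Supp(v)$ contribute to the coefficient of $x^{\hat{\a^*}}$ (namely $\a=\hat{\a^*}+\varepsilon_j$ with $j\geq k^*$), and deduce a contradiction from the basis choice \eqref{F2.3}, which forces $\Fg^\phi\cap\Span\{x_i:i\in I\}=0$.

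The only genuine difference is the finishing move. The paper works from the hypothesis $f_{\a^*}\neq 0$: it evaluates all the polynomials $\hat f_{\hat{\a^*}+\varepsilon_j}$ at a point $\Ba$ where $\hat f_{\a^*}(\Ba)\neq 0$, observes that $\sum_j (\hat{\a^*}+\varepsilon_j)_j\hat f_{\hat{\a^*}+\varepsilon_j}(\Ba)\,x_j$ then has nonzero $x_{k^*}$-component and so lies outside $\Fg^\phi$, and invokes the definition of $\Fg^\phi$ to produce a witness $p$ with $(p-\phi(p))v\neq 0$. You instead take the relation $\sum_j(\hat{\a^*}+\varepsilon_j)_j\,\phi([p,x_j])\,f_{\hat{\a^*}+\varepsilon_j}w_\phi=0$ for all $p$, establish that the functionals $p\mapsto\phi([p,x_j])$ are linearly independent on $\Fp$ (equivalently $\Fg^\phi\cap\Span\{x_i\}=0$), and conclude by surjectivity of $p\mapsto(\phi([p,x_j]))_j$ that every $f_{\hat{\a^*}+\varepsilon_j}w_\phi=0$, in particular $f_{\a^*}=0$. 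These are dual phrasings of the same fact: the paper produces one bad $p$ for the given nonzero polynomial data, while you kill all the polynomial data at once by varying $p$. Your version avoids the evaluation-at-a-point detour and is arguably a bit cleaner, though it relies on the same underlying linear algebra. One small thing worth making explicit when writing this up: the identity $\a^*=\hat{\a^*}+\varepsilon_{k^*}$, so that $j=k^*$ indeed recovers $\a^*$ and the coefficient $(\hat{\a^*}+\varepsilon_{k^*})_{k^*}=\a^*_{k^*}$ is the positive integer $\a^*_{k^*}$, which is what makes $f_{\a^*}=0$ drop out at the end.
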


\begin{proof} We can easily verify that 
	$	\CC[y]w_{\phi} \subseteq W(\phi)_{\phi}.$
	It is enough to show that  $W(\phi)_{\phi}\subseteq \CC[y]w_{\phi}$. To the contrary, suppose that $W(\phi)_{\phi}\not\subseteq \CC[y]w_{\phi}$. Then there exists some nonzero $v\in W(\phi)_{\phi}$ with $\deg(v)=\a>0$. Let $k=\h(\a)$ and 
	 \beqn
	 \ell=\max\{\h(\gamma)\mid \gamma\in\Supp(v)\}.
	 \eeqn
	 Then $k\le\ell$ and the set 
	 \beqn
	 T=\{j\in I\mid k\le j\le \ell~\text{and~}\gamma_j\neq 0~\text{for some }\gamma\in\Supp(v) \}
	 \eeqn
	 is finite. 
	 Assume that 
	 \beqn
	 T=\{k=i_1<i_2<\cdots<i_m=\ell\}.
	 \eeqn
	 Denote by
	 \beqn
	 \a^{(t)}=\a-\varepsilon_k+\varepsilon_{i_t},~t=1,2,\cdots,m.
	 \eeqn
	Then 
	\beqn
	\a=\a^{(1)}>\a^{(2)}>\a^{(3)}>\cdots>\a^{(m)}
	\eeqn
	are adjacent elements in the set
	\beqn
	\Supp(v)\cup\{\a^{(1)},\a^{(2)},\a^{(3)},\cdots,\a^{(m)}\}.
	\eeqn
 with respect to the order ``$>$". Hence, $v$ is expressed as
		\begin{equation*}
		v=x^{\a}f_\a w_{\phi}+\sum_{t=2}^mx^{\a^{(t)}}f_{\a^{(t)}}w_{\phi}+\text{lower degree terms},
	\end{equation*}
with $0\neq f_\a\in \CC[y]$. Note that $\hat{f}_{\a}\neq 0$ since $f_{\a}\neq 0$. We can find some 
$\Ba=(a_j)_{j\in J}$ with $a_j\in\CC$ 
such that  $\hat{f}_\a(\Ba)\neq 0$. Then
\beqn
\a_k\hat{f}_\a(\Ba)x_k+\hat{f}_{\a^{(2)}}(\Ba)x_{i
_2}+\cdots+\hat{f}_{\a^{(m)}}(\Ba)x_{i_m}\notin \Fg^{\phi},
\eeqn
Hence, by the definition of $\Fg^{\phi}$ there exists $p\in\Fp$ such that
\beqn
\phi\([\a_k\hat{f}_\a(\Ba)x_k+\hat{f}_{\a^{(2)}}(\Ba)x_{i_2}+\cdots+\hat{f}_{\a^{(m)}}(\Ba)x_{i_m},p]\)\neq0,
\eeqn
i.e.,
\beqn
\a_{k}\phi([x_k,p])\hat{f}_\a(\Ba)+\phi([x_{i_2},p])\hat{f}_{\a^{(2)}}(\Ba)+\cdots \phi([x_{i_m},p])\hat{f}_{\a^{(m)}}(\Ba)\neq0,
\eeqn
which implies that
\beqn
\a_{k}\phi([x_k,p])\hat{f}_\a+\phi([x_{i_2},p])\hat{f}_{\a^{(2)}}+\cdots \phi([x_{i_m},p])\hat{f}_{\a^{(m)}}\neq0.
\eeqn
It follows that 
\beqn
\a_{k}\phi([x_k,p])f_\a+\phi([x_{i_2},p])f_{\a^{(2)}}+\cdots \phi([x_{i_m},p])f_{\a^{(m)}}\neq0.
\eeqn
By Lemma \ref{NL2.1} and Lemma \ref{L2.3}, we have
	\begin{align*}
	\(p-\phi(p)\)v=&-x^{\hat{\a}}\(	\a_k\phi([x_k,p])f_{\a}+\phi\([x_{i_2},p]\)f_{\a^{(2)}}+\cdots+\phi\([x_{i_m},p]\)f_{\a^{(m)}}\) w_{\phi}\\
	&+\text{lower degree terms}\neq 0,
	\end{align*}
which contradicts $v\in W(\phi)_{\phi}$. The proof is completed.
\end{proof}

From the above proof process, one can deduce the following proposition easily 
\begin{prop}\label{P3.2}
	Any nonzero $\Fp$-invariant subspace of $W(\phi)$ contains a nonzero quasi-Whittaker vector of type $\phi$.
\end{prop}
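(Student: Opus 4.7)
The plan is to leverage the size-lowering effect of the operators $p-\phi(p)$ on $W(\phi)$ that already powers the proof of Theorem~\ref{T3.1}, together with the well-ordering of $\ZZ_+$. Let $U$ be a nonzero $\Fp$-invariant subspace of $W(\phi)$. Since $\{|\deg(u)|\mid u\in U\setminus\{0\}\}$ is a nonempty subset of $\ZZ_+$, it admits a minimum $n$. Pick any $v\in U\setminus\{0\}$ with $|\deg(v)|=n$; I claim $v$ is a quasi-Whittaker vector of type $\phi$, which is exactly what the proposition asks for (and, by Theorem~\ref{T3.1}, actually forces $v\in\CC[y]w_\phi$).

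Suppose for contradiction that $v\notin W(\phi)_\phi$. Then there exists $p\in\Fp$ with $(p-\phi(p))v\neq 0$, and this element still lies in $U$ by $\Fp$-invariance. Expand $v=\sum_{\a}x^\a f_\a w_\phi$ with $f_\a\in\CC[y]$. Since $\CC[y]w_\phi\subseteq W(\phi)_\phi$ (cf.\ \eqref{F2.1} and the easy direction of Theorem~\ref{T3.1}), the summand with $|\a|=0$ contributes zero, while for each $|\a|\ge 1$ Lemma~\ref{L2.3}, applied to the quasi-Whittaker vector $f_\a w_\phi$ in place of the $v$ appearing there, writes $(p-\phi(p))(x^\a f_\a w_\phi)$ as a combination of elements of the form $x^\gamma\cdot(\text{something in }\CC[y])\cdot w_\phi$ with $|\gamma|\le|\a|-1\le n-1$. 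Summing over $\a$ gives $|\deg((p-\phi(p))v)|\le n-1$, contradicting the minimality of $n$.

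The edge case $|I|=0$ should be handled separately but is trivial, since then $W(\phi)=\CC[y]w_\phi\subseteq W(\phi)_\phi$, so every nonzero vector in $U$ is already a quasi-Whittaker vector of type $\phi$. I do not anticipate any real obstacle: Theorem~\ref{T3.1} has done the hard work of showing that $p-\phi(p)$ strictly lowers size on vectors of positive degree, and the only new input is that $\Fp$-invariance of $U$ keeps us inside $U$ when applying $p-\phi(p)$, so descent on $|\deg|$ terminates at a quasi-Whittaker vector automatically.
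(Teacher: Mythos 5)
Your proof is correct and follows essentially the same line as the paper's: pick a nonzero $v$ in the $\Fp$-invariant subspace with $|\deg(v)|$ minimal and run a descent argument on size. The paper's version re-imports the specific $p\in\Fp$ constructed inside the proof of Theorem~\ref{T3.1} to guarantee $(p-\phi(p))v\neq 0$ and has lower degree; you instead get the existence of such a $p$ for free from the assumption $v\notin W(\phi)_\phi$ and then invoke Lemma~\ref{L2.3} to see that \emph{any} $p-\phi(p)$ strictly lowers $|\deg|$ when applied to terms $x^\a f_\a w_\phi$ with $|\a|\ge 1$. This is a minor streamlining, not a different method, and your handling of the $|I|=0$ edge case is also fine.
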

\begin{proof}
 Let $ M $ be a non-zero $ \mathfrak{p} $-invariant subspace of $ W(\phi) $. Take $ v $ to be a non-zero element in $ M $ with $ |\text{deg}(v)| $ minimized. We claim that $ v \in W(\phi)_{\phi} $. Otherwise, assume $ \deg(v) = \alpha $, then necessarily $ |\alpha| > 0 $. We may assume that $v$ is of the form in (\ref{F3.3}). From the proof of Theorem 3.1, we have some $ p \in \mathfrak{p} $ such that 
 
 \[
 \begin{aligned}
 	(p - \phi(p))v = {}& -x^{\hat{\alpha}} \left( \alpha_k \phi([x_k, p]) f_{\alpha} + \phi([x_{i_2}, p]) f_{\alpha^{(2)}} + \cdots + \phi([x_{i_m}, p]) f_{\alpha^{(m)}} \right) w_{\phi} \\
 	&+ \text{lower degree terms} \neq 0
 \end{aligned}
 \]
 
 Since $ |\hat{\alpha}| = |\alpha| - 1 $, $ (p - \phi(p))v $ is a non-zero vector in $ M $ whose size of support  is smaller than that of $ v $, which contradicts the choice of $ v $.
\end{proof}

Now, we can determine the irreduciblity of $W(\phi)$. 

		  \begin{thm}\label{T3.5}  Let $\Fg$ be nonsemisimple Lie algebra,  $\mathfrak{p}$ a nonperfect ideal  of $\Fg$, 
and $\phi:\Fp\rightarrow\CC$ a Lie algebra homomorphism. Then
		$W(\phi)$ is irreducible if and only if $\Fg^{\phi}=\Fp$.
	\end{thm}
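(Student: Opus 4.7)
My plan is to prove each implication separately. The ($\Leftarrow$) direction is immediate from Proposition~\ref{P3.2} and Theorem~\ref{T3.1}, while the substantive ($\Rightarrow$) direction I would establish by exhibiting an explicit proper submodule via a PBW-filtration argument.

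For ($\Leftarrow$), assume $\Fg^\phi = \Fp$, so that $J = \emptyset$ and consequently $\CC[y] = \CC$. Given any nonzero submodule $M \subseteq W(\phi)$, Proposition~\ref{P3.2} furnishes a nonzero quasi-Whittaker vector $v \in M$, and Theorem~\ref{T3.1} places $v \in \CC[y] w_\phi = \CC w_\phi$. Thus, after rescaling, $w_\phi \in M$, and hence $M \supseteq \U(\Fg) w_\phi = W(\phi)$.

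For ($\Rightarrow$), I would argue by contrapositive. Assume $\Fg^\phi \supsetneq \Fp$, fix any $j_0 \in J$, and set $N := \U(\Fg) y_{j_0} w_\phi$; since $y_{j_0} w_\phi$ is a nonzero quasi-Whittaker vector by Lemma~\ref{L2.2}, $N \ne 0$. To show $w_\phi \notin N$, I would equip $W(\phi)$ with the filtration $F_n = \Span_\CC\{x^\a y^\b w_\phi : |\a|+|\b|\le n\}$ induced by the PBW filtration $\U_n$ on $\U(\Fg)$. A direct PBW computation gives $\U_n \cdot F_m \subseteq F_{n+m}$; the decisive graded-level observation is that every $p_k$ acts by zero on $\mathrm{gr}(W(\phi))$, since $p_k \cdot x^\a y^\b w_\phi = \phi(p_k) x^\a y^\b w_\phi + [p_k, x^\a y^\b] w_\phi$ and both summands remain in $F_{|\a|+|\b|}$. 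Hence the $S(\Fg)$-action on $\mathrm{gr}(W(\phi))$ factors through $S(\Fg/\Fp) \cong \CC[x_i, y_j]$, realising $\mathrm{gr}(W(\phi))$ as the free cyclic $\CC[x_i, y_j]$-module on the class of $w_\phi$. Under this identification, $\mathrm{gr}(N)$ is the cyclic submodule generated by the symbol of $y_{j_0} w_\phi$, namely the principal ideal $(y_{j_0}) \subseteq \CC[x_i, y_j]$. Since $1 \notin (y_{j_0})$ in a polynomial ring, we conclude $N \cap F_0 = 0$, so $w_\phi \notin N$, and $N$ is a proper nonzero submodule.

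The main obstacle is precisely the graded identification above: verifying that $p_k$ annihilates $\mathrm{gr}(W(\phi))$ relies on the scalar relation $p_k w_\phi = \phi(p_k) w_\phi$ together with the fact that each adjoint action $[p_k,-]$ preserves filtration degree, and both must be tracked carefully through the PBW basis. Once this is in place, reducing the problem to non-containment of $1$ in the principal ideal $(y_{j_0})$ of a commutative polynomial ring makes the conclusion $w_\phi \notin N$, and hence the reducibility of $W(\phi)$, automatic.
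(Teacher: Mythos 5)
Your ($\Leftarrow$) argument is the paper's own: Theorem~\ref{T3.1} with $J=\emptyset$ gives $W(\phi)_\phi=\CC w_\phi$, and Proposition~\ref{P3.2} then forces any nonzero submodule to contain $w_\phi$. For ($\Rightarrow$) you and the paper choose the same candidate proper submodule $N=\U(\Fg)\,y_{j_0}w_\phi$; the paper dismisses the verification that $w_\phi\notin N$ as ``easy to see,'' and your filtration argument is an attempt to supply it -- but this is where a genuine gap opens. The assertion that $\mathrm{gr}(N)$ equals the principal ideal $(\bar y_{j_0})\subseteq\CC[\bar x_i,\bar y_j]$ is not automatic. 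For a cyclic filtered submodule $N=\U\cdot v$ one always has $\mathrm{gr}(N)\supseteq\mathrm{gr}(\U)\cdot\sigma(v)$, but the inclusion can be strict: whenever $\sigma(u)\sigma(v)=0$ in $\mathrm{gr}(M)$, the element $uv$ drops to a lower filtration level and contributes a symbol that need not lie in $\mathrm{gr}(\U)\cdot\sigma(v)$. Ruling out a drop all the way to degree $0$ is precisely the statement $w_\phi\notin N$ that you are trying to prove, so the identification of $\mathrm{gr}(N)$ cannot simply be asserted.

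The gap is closable with facts you already invoked, so your route can be completed. Because $y_{j_0}w_\phi\in W(\phi)_\phi$ (Lemma~\ref{L2.2}), one has $p^\gamma y_{j_0}w_\phi=\phi(p)^\gamma y_{j_0}w_\phi$, so using the PBW basis $x^\a y^\b p^\gamma$ of $\U(\Fg)$ gives $N=\Span_\CC\{x^\a y^\b y_{j_0}w_\phi:\a\in\ZZ_+^I,\ \b\in\ZZ_+^J\}$. Rewriting each spanning vector in the PBW basis $x^{\a'}y^{\b'}w_\phi$ of $W(\phi)$, the leading term is $x^\a y^{\b+\varepsilon_{j_0}}w_\phi$, of filtration degree $|\a|+|\b|+1$, and the leading symbols $\bar x^\a\bar y^{\b+\varepsilon_{j_0}}$ are linearly independent monomials divisible by $\bar y_{j_0}$. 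Hence any nonzero finite $\CC$-combination of the spanning vectors has nonvanishing top symbol in degree at least $1$, so $F_0\cap N=0$ and $w_\phi\notin N$. Only with this justification inserted does the ($\Rightarrow$) direction actually go through.
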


\begin{proof}
	Assume that $W(\phi)$ is irreducible. Suppose that $\Fg^{\phi}\neq\Fp$, i.e.,  $|J|>0$. Choose an element $j\in J$.    It is easy to see that $U(\Fg)y_jw_{\phi}$ does not contain $w_{\phi}$, which implies that $U(\Fg)y_jw_{\phi}$ is a proper submodule, a contradiction.
	
	Next, assume that $\Fg^{\phi}=\Fp$. By Theorem \ref{T3.1}, we have
	$W(\phi)_{\phi}=\CC w_{\phi}.$ Let $M$ be a nonzero proper $\Fg$-submodule of $W(\phi)$. Then $M$ is a $\Fp$-invariant subspace of $W(\phi)$. By Proposition \ref{P3.2}, $M$ contains a nonzero quasi-Whittaker vector. We 
	can deduce that $w_\phi\in M$. It follows that $M=W(\phi)$. Hence, $W(\phi)$ is an irreducible $\Fg$-module.
\end{proof}

As a corollary, we have

\begin{cor}\label{C3.6}
	Assume that $\Fg$ is finite-dimensional of the form $\Fg=\Fg_0\ltimes \Fp$. Let $g_1,g_2,\cdots,g_t$ be a basis of $\Fg_0$ and $p_1,p_2,\cdots,p_{\ell}$ a basis of $\Fp$. Denote by
	\begin{equation*}
		A_{\phi}=\begin{pmatrix}
			\phi([g_1,p_1]) & \phi([g_2,p_1]) & \cdots & \phi([g_t,p_1])\\
			\phi([g_1,p_2]) & \phi([g_2,p_2]) & \cdots & \phi([g_t,p_2])\\
			\vdots & \vdots &   & \vdots\\
			\phi([g_1,p_\ell]) & \phi([g_2,p_\ell]) & \cdots & \phi([g_t,p_\ell])
		\end{pmatrix}.
	\end{equation*}
	Then $W(\phi)$ is irreducible if and only if $\rank A_{\phi}=t.$ 
\end{cor}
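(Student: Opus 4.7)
The plan is to deduce this directly from Theorem~\ref{T3.5}, which says $W(\phi)$ is irreducible iff $\Fg^{\phi}=\Fp$. So the whole task reduces to translating the equality $\Fg^{\phi}=\Fp$ into a rank condition on the matrix $A_{\phi}$.

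First, since $\Fg=\Fg_0\ltimes\Fp$ as a vector space, any $g\in\Fg$ writes uniquely as $g=\sum_{i=1}^t c_i g_i + p'$ with $c_i\in\CC$ and $p'\in\Fp$. By Lemma~\ref{L2.1} we have $\Fp\subseteq\Fg^{\phi}$, so $g\in\Fg^{\phi}$ if and only if $\sum_{i=1}^t c_i g_i\in\Fg^{\phi}$. Hence $\Fg^{\phi}=\Fp\oplus(\Fg^{\phi}\cap\Fg_0)$, and $\Fg^{\phi}=\Fp$ is equivalent to $\Fg^{\phi}\cap\Fg_0=0$.

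Next I would unpack the definition \eqref{F2.2} of $\Fg^{\phi}$ restricted to $\Fg_0$. An element $\sum_i c_i g_i$ lies in $\Fg^{\phi}$ iff $\phi([\sum_i c_i g_i,p])=0$ for every $p\in\Fp$, which by linearity and the basis $p_1,\dots,p_\ell$ of $\Fp$ is equivalent to
\beqn
\sum_{i=1}^t c_i\,\phi([g_i,p_k])=0,\qquad k=1,2,\dots,\ell.
\eeqn
This is exactly the homogeneous system $A_{\phi}\,(c_1,\dots,c_t)^{\T}=0$. Therefore $\Fg^{\phi}\cap\Fg_0=0$ iff this system has only the trivial solution iff the columns of $A_{\phi}$ are linearly independent iff $\rank A_{\phi}=t$.

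Combining these equivalences with Theorem~\ref{T3.5} gives the result. I expect no serious obstacle: the only care required is the clean identification of $\Fg^{\phi}\cap\Fg_0$ with $\ker A_{\phi}$ via the basis decomposition, and a sentence noting that $\Fp\subseteq\Fg^{\phi}$ (so the $\Fp$-component of $g$ never obstructs membership in $\Fg^{\phi}$). The proof is essentially a one-paragraph linear-algebra translation of Theorem~\ref{T3.5}.
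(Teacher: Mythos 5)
Your argument is correct and is essentially the same as the paper's: both decompose $\Fg^{\phi}=\left(\Fg_0\cap\Fg^{\phi}\right)\ltimes\Fp$, identify $\Fg^{\phi}\cap\Fg_0$ with the solution space of the homogeneous system with coefficient matrix $A_{\phi}$, and then invoke Theorem~\ref{T3.5}. You have simply spelled out the linear-algebra translation in a bit more detail than the paper does.
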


\begin{proof}
	Just note that we have a decomposition
	\beq\label{F3.6}
	\Fg^{\phi}=\(\Fg_0\cap\Fg^{\phi}\)\ltimes \Fp,
	\eeq
	and the linear space $\Fg^{\phi}\cap \Fg_0$ is isomorphic to the  solution space of the  homogeneous linear system of equations with coefficient matrix $A_{\phi}$.  Hence, $W(\phi)$ is irreducible if and only if $\Fg^{\phi}\cap\Fg_0=0$, if and only if $\rank A_{\phi}=t.$ 
\end{proof}

 \subsection{Bland   quasi-Whittaker modules}
 
 In the subsection, we classify all   irreducible quasi-Whittaker modules with unique quasi-Whittaker vector (up to a scalar).
 Let $\Fg$ be nonsemisimple Lie algebra,  $\mathfrak{p}$ a nonperfect ideal  of $\Fg$, 
and $\phi:\Fp\rightarrow\CC$ a Lie algebra homomorphism.

 \begin{Def}
 Let $V$ be an irreducible quasi-Whittaker module over $\Fg$ of type $\phi$. We call $V$ \textbf{bland} if the quasi-Whittaker vector in $V$ is unique up to a scalar, i.e., $\dim V_{\phi}=1$.
 \end{Def}
 
Next Example \ref{Examaple3.1} provides irredcible quasi-Whittaker modules whose quasi-Whittaker vectors are not unique. 
 Now we also make the following definition. 
 
 \begin{Def}
 	A Lie algebra homomorphism  $\phi: \Fp\rightarrow \CC$ is said to be \textbf{extendable} if it can be extended to  a Lie algebra homomorphism    $\phi':\Fg^{\phi}\to \CC$.
 \end{Def}
 
 \begin{prop}
 Let $\phi:\Fp\rightarrow \CC$ be a Lie algebra homomorphism. Then $\phi$ is extendable if and only if  
 \beq
 [\Fg^{\phi},\Fg^{\phi}]\cap \Fp\subseteq \ker\phi.
 \eeq
 \end{prop}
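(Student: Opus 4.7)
The plan is to treat both directions by exploiting the fact that $\mathbb{C}$ is abelian, so that any Lie algebra homomorphism into $\mathbb{C}$ must vanish on commutators. With this in mind, extending $\phi$ as a Lie algebra homomorphism to $\mathfrak{g}^\phi$ is essentially the same thing as extending it as a linear functional that happens to kill $[\mathfrak{g}^\phi,\mathfrak{g}^\phi]$.

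For the necessity, I would start by observing that if $\phi':\mathfrak{g}^\phi\to\mathbb{C}$ is any Lie algebra homomorphism, then $\phi'([a,b])=[\phi'(a),\phi'(b)]=0$ for all $a,b\in\mathfrak{g}^\phi$, so $\phi'$ vanishes on $[\mathfrak{g}^\phi,\mathfrak{g}^\phi]$. Assuming in addition that $\phi'$ extends $\phi$, any $x\in[\mathfrak{g}^\phi,\mathfrak{g}^\phi]\cap\mathfrak{p}$ satisfies $\phi(x)=\phi'(x)=0$, which gives $[\mathfrak{g}^\phi,\mathfrak{g}^\phi]\cap\mathfrak{p}\subseteq\ker\phi$.

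For the sufficiency, I would construct $\phi'$ in two steps. First, on the subspace $\mathfrak{p}+[\mathfrak{g}^\phi,\mathfrak{g}^\phi]\subseteq\mathfrak{g}^\phi$, define $\tilde\phi(p+c)=\phi(p)$ for $p\in\mathfrak{p}$ and $c\in[\mathfrak{g}^\phi,\mathfrak{g}^\phi]$. Then pick any vector-space complement $U$ of this subspace inside $\mathfrak{g}^\phi$ and extend $\tilde\phi$ to a linear functional $\phi':\mathfrak{g}^\phi\to\mathbb{C}$ by declaring $\phi'(U)=0$ (or in fact anything). By construction $\phi'$ restricts to $\phi$ on $\mathfrak{p}$ and vanishes on $[\mathfrak{g}^\phi,\mathfrak{g}^\phi]$, so $\phi'([a,b])=0=[\phi'(a),\phi'(b)]$ for all $a,b\in\mathfrak{g}^\phi$, confirming that $\phi'$ is a Lie algebra homomorphism.

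The only genuine content is the well-definedness of $\tilde\phi$ on $\mathfrak{p}+[\mathfrak{g}^\phi,\mathfrak{g}^\phi]$, and this is exactly where the hypothesis enters: if $p_1+c_1=p_2+c_2$ with $p_i\in\mathfrak{p}$ and $c_i\in[\mathfrak{g}^\phi,\mathfrak{g}^\phi]$, then $p_1-p_2=c_2-c_1\in\mathfrak{p}\cap[\mathfrak{g}^\phi,\mathfrak{g}^\phi]\subseteq\ker\phi$, so $\phi(p_1)=\phi(p_2)$. This is the only place a careful check is needed; everything else reduces to the standard fact that a linear map on a subspace of a vector space extends to the whole space, and that homomorphisms into an abelian Lie algebra are just linear functionals vanishing on the commutator subalgebra.
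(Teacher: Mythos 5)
Your proof is correct and follows essentially the same route as the paper: both define the extension on $\mathfrak{p}+[\mathfrak{g}^\phi,\mathfrak{g}^\phi]$ by killing the commutator subalgebra (with well-definedness secured by the hypothesis), then extend arbitrarily on a linear complement. You spell out the well-definedness check that the paper labels as "easy to check," but the underlying argument is the same.
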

 
 \begin{proof}
 	The necessity is obvious. We only prove the sufficiency. Assume that $[\Fg^{\phi},\Fg^{\phi}]\cap \Fp\subseteq \ker\phi.$ It is easy to check that   $\phi$ can be extended to a linear function $\phi'$ on $[\Fg^{\phi},\Fg^{\phi}]+\Fp$ by defining
 	$\phi'([\Fg^{\phi},\Fg^{\phi}])=0.$
 	Choose $y_s\in \Fg^\phi$  ($s\in S$) linearly independent  such that  
 	\beqn
 	\Fg^{\phi}=\Span_{\CC}\{y_s\mid s\in S\}\oplus \([\Fg^{\phi},\Fg^{\phi}]+\Fp\)
 	\eeqn
 	as a linear space. We extend $\phi'$ onto $\Fg^{\phi}$ by putting 
 	\beqn
 	\phi'(y_s)=\xi_s,~\forall s\in S
 	\eeqn
 	for arbitary $\xi_s\in\CC$. Then we obtain a Lie algebra homomorphism $\phi':\Fg^{\phi}\rightarrow \CC$. It follows that $\phi$ is extendable.
 \end{proof}
 
 \begin{lem}\label{L3.6}
 	If $\Fg$ admits  bland irreducible quasi-Whittaker modules of type $\phi$, then $\phi$ is extendable.
 \end{lem}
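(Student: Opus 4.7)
The plan is to produce the extension $\phi'$ as the scalar by which $\Fg^\phi$ acts on the one-dimensional space of quasi-Whittaker vectors in a bland irreducible module. This is almost immediate from Lemma \ref{L2.2}.

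More precisely, suppose $V$ is a bland irreducible quasi-Whittaker $\Fg$-module of type $\phi$. By blandness, $V_\phi = \CC v$ for some nonzero quasi-Whittaker vector $v$. The first step is to invoke Lemma \ref{L2.2}, which says that $V_\phi$ is stable under $\Fg^\phi$. Since $\Fg^\phi$ acts linearly on the one-dimensional space $\CC v$, every $g \in \Fg^\phi$ acts as a scalar, so we obtain a linear map $\phi': \Fg^\phi \to \CC$ determined by $g \cdot v = \phi'(g) v$ for all $g \in \Fg^\phi$.

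The second step is to check that $\phi'$ is a Lie algebra homomorphism, which follows directly by computing the bracket action: for $g_1, g_2 \in \Fg^\phi$,
\beqn
[g_1, g_2] \cdot v = g_1 g_2 v - g_2 g_1 v = \phi'(g_1)\phi'(g_2) v - \phi'(g_2)\phi'(g_1) v = 0,
\eeqn
so $\phi'([g_1, g_2]) = 0$, as required for a map into the abelian Lie algebra $\CC$. The third step is trivial: for $p \in \Fp \subseteq \Fg^\phi$, the vector $v$ is in particular a quasi-Whittaker vector of type $\phi$, so $p v = \phi(p) v$; comparing with $p v = \phi'(p) v$ gives $\phi'(p) = \phi(p)$, i.e., $\phi'$ restricts to $\phi$ on $\Fp$.

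There is no real obstacle here; the only subtle point is the use of blandness to guarantee one-dimensionality of $V_\phi$, without which $\Fg^\phi$ would only act on $V_\phi$ as a representation rather than a character. The construction is an analogue of the standard fact that a one-dimensional invariant subspace yields a character, applied to the $\Fg^\phi$-module $V_\phi$ granted by Lemma \ref{L2.2}.
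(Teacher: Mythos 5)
Your proposal is correct and takes essentially the same approach as the paper: both invoke Lemma \ref{L2.2} to get $\Fg^\phi$-invariance of the one-dimensional space $V_\phi = \CC v$, read off the resulting character $\phi'$, and note it restricts to $\phi$ on $\Fp$. The only cosmetic difference is that you spell out the bracket computation showing $\phi'$ is a Lie algebra homomorphism, which the paper leaves implicit.
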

 
 \begin{proof}
 	Let $V$ be a bland irreducible quasi-Whittaker module  generated by a cyclic quasi-Whittaker vector $v$ of type $\phi$. Then $V_{\phi}=\CC v$.  By  Lemma \ref{L2.2},
 	\beqn
 	\CC v\subseteq U(\Fg^{\phi})v\subseteq V_{\phi}.
 	\eeqn 
 	It follows that $V_{\phi}=U(\Fg^{\phi})v$ is a one dimensional $\Fg^{\phi}$-module.  Hence, there exists a Lie algebra  homomorphsim 
 	$\phi': \Fg^{\phi}\rightarrow\CC$ such that 
 	\beqn
 	yv=\phi'(y)v,~\forall y\in\Fg^{\phi}.
 	\eeqn
 	Since
 	\beqn
 	\phi(p)v=pv=\phi'(p)v,~\forall p\in \Fp,
 	\eeqn
 	the linear map $\phi'$ is an extension of $\phi$. It implies that $\phi$ is extendable.
 \end{proof}
 
 \begin{ex}\label{Examaple3.1}
\emph{ Let $\Fh_1$ be the $3$-dimensional  Heisenberg algebra, which is a complex Lie algebra with basis $x,y,z$ such that
\beqn
[x,y]=z,~[x,z]=[y,z]=0.
\eeqn
Take the ideal $\Fp=\CC z$ and define the Lie algebra homomorphism
\beqn
\phi:\Fp\rightarrow \CC,~\la z\mapsto \la,~\forall \la\in\CC.
\eeqn
   Then $\Fh_1^{\phi}=\Fh_1$. So any nonzero vector in $W(\phi)$ is a Whittaker vector. Note that $\phi$ is not extendable, since
\beqn
\phi([x,y])=\phi(z)=1\neq 0.
\eeqn
By Lemma \ref{L3.6}, $\Fh_1$ has no bland quasi-Whittaker module of type $\phi$. Actually $\Fh_1$ has infinite-dimensional irreducible quasi-Whittaker modules of type $\phi$ all whose vectors are quasi-Whittaker vectors.}
 \end{ex}
 
 Let $\phi:\Fp\rightarrow\CC$ be an extendable Lie algebra homomorphism and $\phi':\Fg^{\phi}\to \CC$ be an extension of $\phi$. Define a one dimensional $\Fg^{\phi}$-module $\CC v_{\phi'}$ by 
 \beqn
 yv_{\phi'}=\phi'(y)v_{\phi'},~\forall y\in\Fg^{\phi}.
 \eeqn
 Let 
 \beqn
 V(\phi')=\Ind_{\Fg^{\phi}}^{\Fg}\CC v_{\phi'}.
 \eeqn
 be the corresponding induced module. It is clear that $V(\phi')$ is a quasi-Whittaker module generated by a quasi-Whittaker vector $v_{\phi'}:=1\otimes v_{\phi'}$ of type $\phi$.  Hence, there exists a surjective $\Fg$-module homomorphism $\Phi: W(\phi) \rightarrow V(\phi')$ sending $w_{\phi}$ to $v_{\phi'}$.  
 
\begin{lem}\label{L3.4} The $\Fg$-module
	$V(\phi')$ is   irreducible.
\end{lem}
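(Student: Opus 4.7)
The plan is to mimic the proofs of Theorem \ref{T3.1} and Proposition \ref{P3.2}, taking advantage of the fact that since $\mathfrak{g}^{\phi}$ acts on $v_{\phi'}$ by the scalars of $\phi'$, the PBW theorem gives $V(\phi')$ the basis $\{x^{\alpha} v_{\phi'} : \alpha \in \ZZ_+^I\}$. Thus every nonzero $v \in V(\phi')$ has a unique expansion $v = \sum_{\alpha} c_{\alpha} x^{\alpha} v_{\phi'}$ with finitely many nonzero $c_{\alpha} \in \CC$, and I can define $\Supp(v)$ and $\deg(v)$ exactly as in Section 3.1, with the scalars $c_{\alpha}$ playing the role of the polynomial coefficients $f_{\alpha}$.

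First I would prove the analogue of Theorem \ref{T3.1}: $V(\phi')_{\phi} = \CC v_{\phi'}$. The inclusion $\supseteq$ is immediate because $\phi'$ extends $\phi$. For the reverse, suppose toward contradiction that some $v \in V(\phi')_{\phi}$ has $|\deg(v)|>0$. Set $\alpha = \deg(v)$, $k = \h(\alpha)$, and introduce $\ell$, $T = \{k = i_1 < \cdots < i_m = \ell\}$, and $\alpha^{(t)} = \alpha - \varepsilon_k + \varepsilon_{i_t}$ exactly as in the proof of Theorem \ref{T3.1}. Since $v_{\phi'} \in V(\phi')_{\phi}$, Lemma \ref{L2.3} applies term by term, and Lemma \ref{NL2.1} tells us that the $\gamma \in \Supp(v)$ contributing to the coefficient of $x^{\hat{\alpha}}v_{\phi'}$ are precisely $\alpha$ itself and the $\alpha^{(t)}$ with $t \geq 2$; hence for every $p \in \Fp$,
\[
(p-\phi(p))v = -x^{\hat{\alpha}}\Bigl( \alpha_k \phi([x_k,p])c_{\alpha} + \sum_{t=2}^{m} \phi([x_{i_t},p])\, c_{\alpha^{(t)}} \Bigr) v_{\phi'} + \text{lower degree terms}.
\]
The element $g := \alpha_k c_{\alpha} x_k + \sum_{t=2}^{m} c_{\alpha^{(t)}} x_{i_t}$ is a nonzero linear combination of basis vectors $x_i$ of a complement to $\Fg^{\phi}$ in $\Fg$, so $g \notin \Fg^{\phi}$, and the definition \eqref{F2.2} furnishes some $p \in \Fp$ with $\phi([g,p])\neq 0$. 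This $p$ makes $(p-\phi(p))v \neq 0$, contradicting $v \in V(\phi')_{\phi}$.

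Next I would establish the analogue of Proposition \ref{P3.2}: every nonzero $\Fp$-invariant subspace $M \subseteq V(\phi')$ meets $V(\phi')_{\phi}$. Pick $m \in M$ nonzero with $|\deg(m)|$ minimal; if $|\deg(m)|=0$ then $m \in \CC v_{\phi'}$ and we are done, and otherwise the argument of the previous paragraph produces $p \in \Fp$ for which $(p-\phi(p))m \in M$ is nonzero with strictly smaller $|\deg|$, contradicting minimality. Combined with the first step, this forces any nonzero $\Fg$-submodule to contain a nonzero scalar multiple of $v_{\phi'}$, hence to equal $V(\phi')$, proving irreducibility. The only real obstacle is the bookkeeping in the first step, namely the careful use of Lemma \ref{NL2.1} to identify exactly which $\gamma \in \Supp(v)$ feed into the $x^{\hat{\alpha}}$-coefficient; once this is done as in Theorem \ref{T3.1}, the remainder is essentially a transcription.
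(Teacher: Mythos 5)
Your proposal is correct, and it works, but it takes a genuinely different route from the paper's. You re-derive the analogues of Theorem \ref{T3.1} and Proposition \ref{P3.2} inside $V(\phi')$ directly, exploiting the fact that the PBW basis there is $\{x^{\alpha}v_{\phi'}\}$ with scalar coefficients (so the polynomial evaluation step of Theorem \ref{T3.1} collapses to a triviality), and then you argue exactly as before: the leading coefficient of $(p-\phi(p))v$ is $-\phi([g,p])$ for a nonzero $g \in \Span_{\CC}\{x_i\}$, hence nonzero for a suitable $p$. The paper instead reuses the already-proved results in $W(\phi)$ without reproving anything: given $0 \neq v = \sum_{\alpha} a_{\alpha}x^{\alpha}v_{\phi'} \in M$, it lifts $v$ to $w = \sum_{\alpha} a_{\alpha}x^{\alpha}w_{\phi} \in W(\phi)$ through the surjection $\Phi\colon W(\phi) \to V(\phi')$, notes that $\U(\Fp)w$ stays inside $\Span\{x^{\beta}w_{\phi}\}$, and then applies Proposition \ref{P3.2} and Theorem \ref{T3.1} to force $w_{\phi} \in \U(\Fp)w$; pushing forward through $\Phi$ gives $v_{\phi'} \in \U(\Fp)v \subseteq M$. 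The two arguments share the same combinatorial core, but the paper's version is shorter because it treats $W(\phi)$ as the universal object and only works there, while yours is more self-contained and makes explicit that the degree-reduction mechanism transfers verbatim to $V(\phi')$. Your observation that the coefficients $c_{\alpha}$ play the role of $f_{\alpha}$ with no need for the evaluation at a point $\Ba$ is a genuine simplification of the combinatorics in this special case.
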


\begin{proof} Consider the above surjective $\Fg$-module homomorphism $\Phi: W(\phi) \rightarrow V(\phi')$. 	Let $M$ be a nonzero submodule of $V(\phi')$. It is enough to show that $M=V(\phi')$, or $v_{\phi'}\in M$. 

 Take $0\neq v\in M$. Then $v$ can be uniquely written as 
	\beqn
	v=\sum_{\a\in\ZZ_+^I}a_{\a}x^\a v_{\phi'}
	\eeqn
	with finitely many nonzero $a_{\a}$'s.  Let
	\beqn
	w=\sum_{\a\in\ZZ_+^I}a_{\a}x^\a w_{\phi}\in W(\phi).
	\eeqn
It is easy to see that vectors in $\U(\Fp)w$ are linear combinations of vectors of the form $x^\beta w_{\phi}$ for some $\beta\in \mathbb{Z}^I$.
	  By Proposition \ref{P3.2}, there exists a nonzero quasi-Whittaker vector in 
$\U(\Fp)w\subseteq W(\phi)$ which must be $w_{\phi}$. Thus $v_{\phi'}=\Phi(w_{\phi})\in M$. Hence, $V(\phi')$ is irreducible.
\end{proof}

From the above proof it is clear that the $\Fg$-module $V(\phi')$ is bland of type $\phi$.

\begin{lem}
	Assume that $\phi:\Fp:\rightarrow\CC$ is extendable. Then each bland irreducible quasi-Whittaker module of type $\phi$ is isomorphic to $V(\phi')$ for some extension $\phi'$  of $\phi$. 
\end{lem}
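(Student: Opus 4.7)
The plan is to realize any bland irreducible quasi-Whittaker module of type $\phi$ as an induced module $V(\phi')$ by constructing an appropriate extension $\phi'$ and then appealing to Frobenius reciprocity together with Lemma \ref{L3.4}.

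First, I would start with a bland irreducible quasi-Whittaker $\Fg$-module $V$ of type $\phi$, generated by a cyclic quasi-Whittaker vector $v$. By blandness, $V_{\phi} = \CC v$. This is exactly the setting already analyzed in the proof of Lemma \ref{L3.6}: by Lemma \ref{L2.2}, $V_{\phi}$ is a $\Fg^{\phi}$-submodule of $V$, so the one-dimensional space $\CC v$ is $\Fg^{\phi}$-invariant. Hence there exists a Lie algebra homomorphism $\phi': \Fg^{\phi}\to\CC$ with $yv = \phi'(y)v$ for all $y\in\Fg^{\phi}$, and $\phi'|_{\Fp}=\phi$ because $v$ is a quasi-Whittaker vector of type $\phi$. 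Thus $\phi'$ is a genuine extension of $\phi$ to $\Fg^{\phi}$.

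Next I would use the universal property of induction. The $\Fg^{\phi}$-module $\CC v_{\phi'}$ defining $V(\phi')$ maps $\Fg^{\phi}$-linearly to $V$ via $v_{\phi'}\mapsto v$, since both actions of $\Fg^{\phi}$ are by the scalar $\phi'(\cdot)$. By Frobenius reciprocity, this extends uniquely to a $\Fg$-module homomorphism
\beqn
\Psi: V(\phi')=\Ind_{\Fg^{\phi}}^{\Fg}\CC v_{\phi'}\longrightarrow V,\qquad \Psi(v_{\phi'})=v.
\eeqn
Since $V$ is generated by $v$ as a $\Fg$-module, $\Psi$ is surjective.

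Finally, to see that $\Psi$ is injective, I would invoke Lemma \ref{L3.4}, which states that $V(\phi')$ is already irreducible. Hence $\ker\Psi$ is either $0$ or all of $V(\phi')$; the latter is excluded because $\Psi(v_{\phi'})=v\neq 0$. Therefore $\Psi$ is a $\Fg$-module isomorphism, completing the proof. I do not foresee a serious obstacle here: the argument is essentially a Frobenius-reciprocity / Schur-type gluing, and the only subtle point is ensuring that the extension $\phi'$ produced in the first step agrees with $\phi$ on $\Fp$, which is automatic from the definition of a quasi-Whittaker vector of type $\phi$.
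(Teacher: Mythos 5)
Your proposal is correct and follows essentially the same route as the paper: extract $\phi'$ from the one-dimensional $\Fg^{\phi}$-action on $V_{\phi}=\CC v$ (the paper simply cites the argument already made in the proof of Lemma \ref{L3.6}), obtain the surjection $V(\phi')\to V$ from the universal property of induction, and conclude it is an isomorphism because $V(\phi')$ is irreducible by Lemma \ref{L3.4}.
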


\begin{proof}
	Let $V$ be a bland irreducible quasi-Whittaker module generated by a   quasi-Whittaker vector $v$ of type $\phi$.  From the proof of Lemma \ref{L3.6},  there exists a Lie algebra homomorphism $\phi':\Fg^{\phi}\rightarrow\CC$ extending $\phi$ such that
	 	\beqn
	yv=\phi'(y)v,~\forall y\in\Fg^{\phi}.
	\eeqn
	Form the construction of $V(\phi')$,   there exists a surjective $\Fg$-module homomorphism $\Phi':V(\phi')\rightarrow V$ sending $v_{\phi'}$ to $v$. Since $V(\phi')$ is irreducible, $\Phi'$ must be an isomorphism. 
\end{proof}

The following result is clear.

\begin{lem}
			Let $\phi',\phi'':\Fg^{\phi}\rightarrow\CC$ be two Lie algebra homomorphisms extending $\phi$. Then $V(\phi')$ is isomorphic to $V(\phi'')$ if and only if $\phi'=\phi''$.
\end{lem}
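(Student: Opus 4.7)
The plan is to dispose of the easy direction first and then pin down any isomorphism via uniqueness of the quasi-Whittaker vector. The sufficiency is immediate: if $\phi'=\phi''$, then the induced modules $V(\phi')=\Ind_{\Fg^{\phi}}^{\Fg}\CC v_{\phi'}$ and $V(\phi'')=\Ind_{\Fg^{\phi}}^{\Fg}\CC v_{\phi''}$ are constructed from identical one-dimensional $\Fg^{\phi}$-modules, hence are equal (in particular isomorphic).

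For the necessity, suppose $\Psi: V(\phi')\to V(\phi'')$ is a $\Fg$-module isomorphism. By Lemma \ref{L3.4} (and the remark following it), both $V(\phi')$ and $V(\phi'')$ are bland irreducible quasi-Whittaker modules of type $\phi$, so each has a one-dimensional space of quasi-Whittaker vectors spanned by $v_{\phi'}$ and $v_{\phi''}$ respectively. Since $\Psi$ is $\Fp$-equivariant, it sends quasi-Whittaker vectors of type $\phi$ to quasi-Whittaker vectors of type $\phi$; indeed, for any $p\in\Fp$, $p\Psi(v_{\phi'})=\Psi(pv_{\phi'})=\Psi(\phi(p)v_{\phi'})=\phi(p)\Psi(v_{\phi'})$. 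Thus $\Psi(v_{\phi'})\in V(\phi'')_{\phi}=\CC v_{\phi''}$, so $\Psi(v_{\phi'})=c\,v_{\phi''}$ for some $c\in\CC^{\times}$.

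Now I would compute $\Psi(yv_{\phi'})$ for arbitrary $y\in\Fg^{\phi}$ in two ways. Using the $\Fg^{\phi}$-action on the generator of $V(\phi')$ gives $\Psi(yv_{\phi'})=\Psi(\phi'(y)v_{\phi'})=c\,\phi'(y)v_{\phi''}$. Using $\Fg$-equivariance of $\Psi$ and the $\Fg^{\phi}$-action on $V(\phi'')$ gives $\Psi(yv_{\phi'})=y\Psi(v_{\phi'})=c\,yv_{\phi''}=c\,\phi''(y)v_{\phi''}$. Comparing the two expressions and using $c\neq 0$ yields $\phi'(y)=\phi''(y)$ for every $y\in\Fg^{\phi}$, i.e., $\phi'=\phi''$.

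I do not anticipate any real obstacle; the argument is entirely driven by blandness. The only point requiring care is verifying that $\Psi(v_{\phi'})$ lies in $V(\phi'')_{\phi}$, which is handled by the short $\Fp$-equivariance computation above.
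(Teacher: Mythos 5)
Your proof is correct. The paper offers no proof here (it simply states ``The following result is clear''), and your write-up supplies exactly the argument the authors have in mind: the sufficiency is tautological, and for the necessity you use blandness of $V(\phi')$ and $V(\phi'')$ (established in the remark after Lemma \ref{L3.4}) to pin down $\Psi(v_{\phi'})=c\,v_{\phi''}$ with $c\neq 0$, and then read off $\phi'=\phi''$ by comparing the two computations of $\Psi(y v_{\phi'})$ for $y\in\Fg^{\phi}$.
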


To summarize the above discussions, we obtain the following theorem.

\begin{thm}\label{T3.11} Let $\Fg$ be nonsemisimple Lie algebra,  $\mathfrak{p}$ a nonperfect ideal  of $\Fg$, 
and $\phi:\Fp\rightarrow\CC$ a Lie algebra homomorphism.  Then $\Fg$ admits bland irreducible quasi-Whittaker modules if and only if $\phi$ is extendable. In this case, the correspondence $\phi'\mapsto V(\phi')$ gives a bijection between the set of extensions of $\phi$ and the set of isomorphism class of bland quasi-Whittaker modules of type $\phi$.
\end{thm}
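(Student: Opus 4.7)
The plan is to assemble Theorem \ref{T3.11} from the three preceding lemmas, which already do all the real work; the theorem is essentially a packaging statement. First I would handle the biconditional. For the ``only if'' direction, I would simply invoke Lemma \ref{L3.6}: the existence of a bland irreducible quasi-Whittaker module forces $\phi'$ to exist on $\Fg^{\phi}$ because the one-dimensional $\Fp$-eigenspace $V_{\phi}$ is $\Fg^{\phi}$-stable (Lemma \ref{L2.2}) and hence gives a character of $\Fg^{\phi}$ extending $\phi$. For the ``if'' direction, given any extension $\phi':\Fg^{\phi}\to\CC$ of $\phi$, the induced module $V(\phi')=\Ind_{\Fg^{\phi}}^{\Fg}\CC v_{\phi'}$ is a quasi-Whittaker module of type $\phi$ by construction, irreducible by Lemma \ref{L3.4}, and bland because the argument in the proof of that lemma shows any quasi-Whittaker vector in $V(\phi')$ must be a scalar multiple of $v_{\phi'}$ (the only candidate of top degree $0$ in $\ZZ_{+}^I$).

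Next I would address the bijection claim. The map $\phi'\mapsto V(\phi')$ is well-defined by the preceding paragraph. Surjectivity (onto isomorphism classes of bland irreducible quasi-Whittaker modules of type $\phi$) is exactly the penultimate lemma: any such $V$ is generated by a unique-up-to-scalar vector $v$, the $\Fg^{\phi}$-action on $\CC v$ yields an extension $\phi'$ of $\phi$, and the universal property of $V(\phi')$ gives a surjection $V(\phi')\twoheadrightarrow V$ which is an isomorphism by irreducibility of $V(\phi')$. Injectivity is the final preceding lemma: if $V(\phi')\cong V(\phi'')$, any isomorphism sends a generating quasi-Whittaker vector to a scalar multiple of another, and comparing the $\Fg^{\phi}$-action forces $\phi'=\phi''$.

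Since every ingredient is already established, I do not anticipate any obstacle; the only care needed is to check that the $\Fg^{\phi}$-character obtained from a bland module in the ``only if'' direction is the \emph{same} extension that reproduces the module under the correspondence, so that the two halves of the theorem fit together. This is immediate by tracing the construction: the character $\phi'$ read off from $V$ is precisely the one used to build $V(\phi')\cong V$. I would therefore write the proof as a short three-sentence paragraph citing Lemmas \ref{L3.6}, \ref{L3.4}, and the two unnumbered lemmas above, with no new computation required.
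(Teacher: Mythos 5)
Your proof is correct and follows essentially the same route as the paper, which itself only says ``To summarize the above discussions'' and leans on Lemma \ref{L3.6}, Lemma \ref{L3.4} together with the remark following it, and the two unnumbered lemmas for surjectivity and injectivity of $\phi'\mapsto V(\phi')$. Your parenthetical about degree-$0$ candidates is a fair gloss on why $V(\phi')_{\phi}=\CC v_{\phi'}$, the precise justification being the lift-to-$W(\phi)$ argument via Proposition \ref{P3.2} and Theorem \ref{T3.1} already used in the proof of Lemma \ref{L3.4}.
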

 
\begin{rem}
	\emph{Assume that $\Fg$ is of the form $\Fg=\Fg_0\ltimes \Fp$. Then it is easy to see that each Lie algebra homomorphism from $\Fp$ to  $\CC$ is extendable.  One may expect that each irreducible quasi-Whittaker module is bland. Unfortunately, this is not correct in general.  For example, let $\Fg=\Fgl_n(\CC)=\Fsl_n(\CC)\oplus\CC c$. Let $V$ be an irreducible $\Fsl_n$-module with $\dim V>1$. Then $V$ can be viewed as an irreducible $\Fgl_n(\CC)$-module by declaring $cv=v$ for any $v\in V$.  It is clear that $V$ can be treated  as a quasi-Whittaker module of type 
	\beqn
	\phi:\CC c\rightarrow \CC,~c\mapsto 1,
	\eeqn
	and any element in $V$ is a quasi-Whittaker vector. In order to have good properties for quasi-Whittaker modules over $\Fg$, we generally do not take $\Fp$ to be in the center of $\Fg$.}
\end{rem} 

Finally, we consider a very special case, which is just the situation in \cite{CC,CW}.

\begin{thm}\label{T3.10} Let $\Fg$ be nonsemisimple Lie algebra,  $\mathfrak{p}$ a nonperfect ideal  of $\Fg$, 
and $\phi:\Fp\rightarrow\CC$ a Lie algebra homomorphism.
	Assume that    $\dim \Fg^{\phi}/\Fp\le 1.$ Then any irreducible quasi-Whittaker $\Fg$-module of type $\phi$ is bland.
\end{thm}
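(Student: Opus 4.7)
The plan is to split on the value of $\dim\Fg^{\phi}/\Fp$.

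If $\dim\Fg^{\phi}/\Fp=0$ (i.e.\ $\Fg^{\phi}=\Fp$), Theorem \ref{T3.5} says $W(\phi)$ is irreducible and Theorem \ref{T3.1} gives $W(\phi)_{\phi}=\CC[y]w_{\phi}=\CC w_{\phi}$ (since $J=\emptyset$). Any irreducible quasi-Whittaker module of type $\phi$ is a nonzero quotient of $W(\phi)$, hence isomorphic to $W(\phi)$ itself, which is therefore bland.

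For the remaining case $\dim\Fg^{\phi}/\Fp=1$, I pick $y\in\Fg^{\phi}\setminus\Fp$, so $\Fg^{\phi}=\Fp\oplus\CC y$ and $\CC[y]$ is the univariate polynomial algebra. Let $V$ be an irreducible quasi-Whittaker module of type $\phi$ with cyclic generator $v_0$, and realize $V\cong W(\phi)/M$. Theorem \ref{T3.5} forces $M\ne 0$. The crucial step is to apply Proposition \ref{P3.2} to the nonzero $\Fp$-invariant subspace $M$: it must contain a nonzero quasi-Whittaker vector, which by Theorem \ref{T3.1} lies in $\CC[y]w_{\phi}$. Consequently $M\cap\CC[y]w_{\phi}$ is a nonzero $\CC[y]$-submodule of $\CC[y]w_{\phi}$ ($y$-stability comes from $\Fg$-invariance of $M$), hence a principal ideal $f(y)\CC[y]w_{\phi}$ for some $f\in\CC[y]$. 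The polynomial $f$ cannot be a nonzero constant, else $w_{\phi}\in M$ and $V=0$, so $\deg f\geq 1$.

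Pick a complex root $c$ of $f$ and factor $f=(y-c)g$ with $\deg g<\deg f$, so $g(y)w_{\phi}\notin M$. Then $v':=g(y)v_0\neq 0$ in $V$, $(y-c)v'=f(y)v_0=0$, and $v'$ is a quasi-Whittaker vector of type $\phi$ (being the image of $g(y)w_{\phi}\in W(\phi)_{\phi}$) which is additionally a $y$-eigenvector with eigenvalue $c$. Extend $\phi$ to $\phi':\Fg^{\phi}\to\CC$ by $\phi'(y)=c$; this is a Lie algebra homomorphism, because $[y,\Fp]\subseteq\Fp$ and $\phi([y,p])=0$ for each $p\in\Fp$ by the definition of $\Fg^{\phi}$. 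Induce $V(\phi')$; by Lemma \ref{L3.4} it is irreducible, and its universal property yields a surjection $V(\phi')\twoheadrightarrow V$ sending $v_{\phi'}\mapsto v'$, which is therefore an isomorphism. Since $V(\phi')$ is bland by the remark after Lemma \ref{L3.4}, so is $V$.

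The main obstacle is the production of a $y$-eigenvector inside $V_{\phi}$, and this is precisely where the bound $\dim\Fg^{\phi}/\Fp\le 1$ is indispensable: it makes $\CC[y]$ a single-variable polynomial ring whose $\CC[y]$-submodules are principal ideals, so one polynomial $f$ and one of its complex roots suffice to produce a common eigenvector of $\Fg^{\phi}$. In higher dimensions the analogous step would only yield a submodule of a multivariate polynomial ring, where no single factorization delivers such a vector.
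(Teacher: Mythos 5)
Your proof is correct and follows essentially the same route as the paper's: split on $\dim\Fg^{\phi}/\Fp$; in the nontrivial case $\dim\Fg^{\phi}/\Fp=1$, use Proposition~\ref{P3.2} and Theorem~\ref{T3.1} to locate a nonzero quasi-Whittaker vector $f(y)w_{\phi}$ inside the kernel of $W(\phi)\twoheadrightarrow V$, factor out a linear term $y-c$ to produce a $y$-eigenvector $v'$ in $V_\phi$, extend $\phi$ by $\phi'(y)=c$, and identify $V\cong V(\phi')$ via Lemma~\ref{L3.4}. The only cosmetic differences are that you phrase the key intermediate step through the principal-ideal structure of $\CC[y]$ rather than the paper's minimal-degree choice (equivalent in substance), and you spell out explicitly why $\phi'$ is a Lie algebra homomorphism, a point the paper asserts without detail.
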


\begin{proof}
	Let $V$ be an irreducible quasi-Whittaker module of type $\phi$.
	If $\dim \Fg^{\phi}/\Fp=0$, then $V$ is isomorphic to $W(\phi)$ for the Lie algebra homomorphism $\phi:\Fp\rightarrow\CC$. By Theorem \ref{T3.1}, 
	\beqn
	W(\phi)_{\phi}=\U(\Fg^{\phi})w_{\phi}=\U(\Fp)w_{\phi}=\CC w_{\phi}.
	\eeqn
	It follows that $V$ is bland.
	
	If $\dim \Fg^{\phi}/\Fp=1$,  one can take some $y\in\Fg^{\phi}$ such that $\Fg^{\phi}=\CC y\oplus \Fp.$  Take a nonzero quasi-Whittaker vector $v\in V$.   Then
    there exists a surjective $\Fg$-module homomorphism $\Phi:W(\phi)\rightarrow V$ such that $\Phi(w_{\phi})=v$. By Theorem \ref{T3.5}, $W(\phi)$ is reducible. Hence, $\ker\Phi$ is a maximal submodule of $W(\phi)$. By Proposition \ref{P3.2}, there exists a nonzero quasi-Whittaker vector $w\in \ker\Phi$. By Theorem \ref{T3.1}, there exists some nonzero polynomial $f(y)$  such that $w=f(y)w_{\phi}$. Take such an $f(y)$ with minimal degree and assume that  
    \beqn
    f(y)=a(y-a_1)(y-a_2)\cdots(y-a_s).
    \eeqn
    Then 
    $    	(y-a_1)(y-a_2)\cdots(y-a_s)v=0,$ and  
    	\beqn
    	v'=(y-a_{1})(y-a_2)\cdots(y-a_{s-1})v\in V\setminus\{0\}.
    	\eeqn
    	Then $v'$ is a nonzero quasi-Whittaker vector of type $\phi$ such that $yv'=a_sv'$. Define 
    	$\phi':\Fg^{\phi}\rightarrow \CC$
    	by setting $\phi'(y)=a_s$. Then $\phi'$ is a Lie algebra homomorphism extending $\phi$. By the construction of $V(\phi')$, we see that $V$ is a homomorphic image of $V(\phi')$.  Since $V(\phi')$ and $V$ are irreducible, we must have $V\cong V(\phi')$.  It follows that $V$ is bland.
\end{proof}

From the above proof we obtain the following

\begin{cor}\label{C3.12} Let $\Fg$ be nonsemisimple Lie algebra,  $\mathfrak{p}$ a nonperfect ideal  of $\Fg$, 
and $\phi:\Fp\rightarrow\CC$ a Lie algebra homomorphism.
	Assume that $\dim \Fg^{\phi}/\Fp\leq 1$.  
		\begin{enumerate}
		\item[(1)] If $\dim \Fg^{\phi}/\Fp = 0$, then  $W(\phi)$ is irreducible over $\Fg$.
		\item[(2)] If $\dim \Fg^{\phi}/\Fp=1$, taking $y\in\Fg^{\phi}$ with $\Fg^{\phi}=\CC y\oplus \Fp$, then all maximal submodules of $W(\phi)$ are of the form $$J_{\xi}:=\mathrm{Span}_{\mathbb{C}}\{x^{\alpha}y^n(y -\xi)w_{\phi}\mid\alpha\in\mathbb{Z}_+^I,n\in\mathbb{Z}_+\},\quad \text{where } \xi\in\mathbb{C}.$$
		In fact, we have $V(\phi')\cong W(\phi)/J_{\phi'(y)}$ for any extension $\phi'$ of $\phi$.
	\end{enumerate}
\end{cor}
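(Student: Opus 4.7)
Part (1) will be immediate: when $\dim\Fg^{\phi}/\Fp = 0$, we have $\Fg^{\phi}=\Fp$, and Theorem \ref{T3.5} yields that $W(\phi)$ is irreducible.

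For part (2), the plan is to identify each $J_\xi$ with the kernel of the canonical surjection onto $V(\phi')$, where $\phi'$ is the unique extension of $\phi$ to $\Fg^{\phi}$ determined by $\phi'(y)=\xi$. First I would invoke PBW on $\Fg^{\phi}=\CC y\oplus \Fp$ to note that $W(\phi)$ admits the basis $\{x^{\alpha}y^{n}w_{\phi} : \alpha\in\ZZ_+^I,\, n\in\ZZ_+\}$. Let $\Phi_\xi:W(\phi)\to V(\phi')$ be the surjection sending $w_\phi\mapsto v_{\phi'}$. The inclusion $J_\xi\subseteq\ker\Phi_\xi$ is direct, since $yv_{\phi'}=\xi v_{\phi'}$ forces $y^n(y-\xi)v_{\phi'}=0$, hence $\Phi_\xi\bigl(x^{\alpha}y^n(y-\xi)w_\phi\bigr)=0$. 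For the reverse inclusion, I would show by induction on $n$, using the identity $x^\alpha y^n w_\phi = x^\alpha y^{n-1}(y-\xi)w_\phi+\xi\, x^\alpha y^{n-1}w_\phi$, that
\[
x^\alpha y^n w_\phi \equiv \xi^n x^\alpha w_\phi \pmod{J_\xi}.
\]
Thus every element of $W(\phi)$ is congruent modulo $J_\xi$ to a unique element in $\Span_{\CC}\{x^\alpha w_\phi\mid \alpha\in\ZZ_+^I\}$, and the linear independence of $\{x^\alpha v_{\phi'}\}$ in $V(\phi')$ then forces $\ker\Phi_\xi\subseteq J_\xi$. Combined with Lemma \ref{L3.4}, which gives the irreducibility of $V(\phi')$, this shows that $J_\xi$ is a maximal submodule of $W(\phi)$ and that $V(\phi')\cong W(\phi)/J_{\phi'(y)}$.

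For the converse, let $M$ be any maximal submodule of $W(\phi)$. Then $W(\phi)/M$ is irreducible and quasi-Whittaker of type $\phi$, so by Theorem \ref{T3.10} it is bland, and by Theorem \ref{T3.11} we have $W(\phi)/M\cong V(\phi')$ for a unique extension $\phi'$ of $\phi$. The composition $W(\phi)\twoheadrightarrow W(\phi)/M\cong V(\phi')$ sends $w_\phi$ to a quasi-Whittaker vector in $V(\phi')$, which is a scalar multiple of $v_{\phi'}$; rescaling, this composition coincides with $\Phi_{\phi'(y)}$, whence $M=\ker\Phi_{\phi'(y)}=J_{\phi'(y)}$. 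The main point that requires care is that $J_\xi$ is a $\Fg$-submodule to begin with, which is not obvious from the defining spanning set; this will be subsumed by the identification $J_\xi = \ker \Phi_\xi$, since kernels of module homomorphisms are automatically submodules. No other delicate step arises.
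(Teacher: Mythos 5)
Your proof is correct, and it takes essentially the route the paper intends: the paper gives no separate argument for this corollary, stating only that it follows ``from the above proof'' of Theorem \ref{T3.10}, which establishes blandness by factoring a minimal-degree polynomial $f(y)$ whose value on $w_\phi$ lies in $\ker\Phi$. Your contribution is to make the identification $\ker\Phi_\xi = J_\xi$ explicit, which the paper genuinely leaves unstated. The congruence $x^\alpha y^n w_\phi \equiv \xi^n x^\alpha w_\phi \pmod{J_\xi}$, combined with the PBW-linear-independence of $\{x^\alpha v_{\phi'}\}$ in $V(\phi')$, is a clean and self-contained way to pin down the kernel, and it simultaneously resolves the point you flag at the end --- that $J_\xi$ is a submodule at all --- since it exhibits $J_\xi$ as the kernel of a module map. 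The converse direction (every maximal $M$ equals some $J_\xi$) correctly chains Theorem \ref{T3.10}, Theorem \ref{T3.11}, and blandness to force $W(\phi)\twoheadrightarrow W(\phi)/M$ to send $w_\phi$ to a nonzero multiple of $v_{\phi'}$, hence $M = \ker\Phi_{\phi'(y)} = J_{\phi'(y)}$. One small phrasing caveat: when you write that each element is congruent to a ``unique'' element of $\Span_\CC\{x^\alpha w_\phi\}$, the uniqueness is a consequence of (not a prerequisite for) the inclusion $\ker\Phi_\xi\subseteq J_\xi$ you are in the middle of proving; the logically tight order is existence of a representative, then $J_\xi\subseteq\ker\Phi_\xi$, then use $\Phi_\xi$ to kill the coefficients, but the argument you give amounts to this and is sound.
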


\section{Applications}
In this section, we will apply the main results established in Section 3 to classify   irreducible quasi-Whittaker modules over many   Lie algebras to recover many known rersults, including Conformal Galilei algebra and $n$-th Schrödinger algebra.  Then we also obtain a lot of irreducible quasi-Whittaker modules over mirror Heisenberg-Virasoro algebra, Heisenberg-Virasoro algebra, and Planer Galilei Conformal algebra. In addition,  we obtain a lot of irreducible  smooth $\W_n^+$-modules of height $2$.

\subsection{Conformal Galilei algebra and $n$-th Schrödinger algebra}

Following \cite{MOR}, let $\ell\in \frac{1}{2}\mathbb{N}$ and   $\overline{0,2\ell} = \{0,1,\ldots,2\ell\}$. The (centreless) conformal Galilei algbera $\Fg^{(\ell)}$ is a complex Lie algebra with basis $\{e, h, f, p_k\mid k \in \overline{0,2\ell}\}$.  The non-vanishing Lie brackets for $\Fg^{(\ell)}$ are given by
\begin{align*}
	&[h, e] = 2e,~~~~~~~~~~~~~~ [h, f]= -2f, ~~~~~                 [e, f] = h,\\
	&[h, p_k]= 2(l - k)p_k,~~~ [e, p_k]= kp_{k - 1}, ~~~[f, p_k]= (2l - k)p_{k + 1}.
\end{align*}
The subalgebra of $\Fg^{(\ell)}$ spanned by $e,f, h$ is isomorphic to   $\Fsl_2(\CC)$.  Let 
\beqn
\Fp=\Span_{\CC}\{ p_k\mid k\in\overline{0,2\ell}\},
\eeqn
which is an ideal of $\Fg^{(\ell)}$. As a $\Fsl_2(\CC)$-module, $\Fp$ is isomorphic to the irreducible one with dimension $2\ell+1$. We have a decomposition
\beqn
\Fg^{(\ell)}=\Fsl_2(\CC)\ltimes \Fp.
\eeqn
Let $\phi: \Fp\rightarrow \CC$ be a nonzero Lie algebra homomorphism. For any two-dimensiomal subalgebra $\mathfrak{a}$ of $\Fsl_2(\CC)$, we know that 
$[\mathfrak{a},\Fp]=\Fp$.
 Since $\phi\neq0$, we must have
 $\phi\([\mathfrak{a},\mathfrak{p}]\)\neq 0$.  It follows that $\dim \Fg^{(\ell)\phi}/\Fp\le1$.

Following \cite{T}, the $n$-th Schrödinger algebra $\mathfrak{sch}_n$ is a complex Lie algebra with basis $\{h,e,f,x_i,y_i,z\mid i = 1,\ldots,n\}$ and the Lie brackets
\begin{align*}
	&[h,e]=2e,\quad\quad\quad\quad\quad [h,f]= - 2f,\quad [e,f]=h,\\
	&[h,x_i]=x_i,\quad\quad\quad\quad~~ [e,x_i]=0,\quad \quad~[f,x_i]=y_i,\\
	&[h,y_i]=-y_i,\quad\quad\quad\quad [e,y_i]=x_i,\quad\quad [f,y_i]=0,\\
	&[x_i,x_j]=[y_i,y_j]=0,~~[x_i,y_j]=\delta_{ij}z,\quad [z,\cdot]=0.
\end{align*}
Note that the subspace spanned by $h,e,f$ is in fact a subalgebra, which is isomorphic to $\Fsl_2(\CC)$ canonically. Let
\beqn
\Fh_n=\Span_{\CC}\{x_i,y_i,z\mid 1\le i\le n\}.
\eeqn
Then $\Fh_n$ is isomorphic to the $n$-th Heisenberg Lie subalgebra. Moreover, we have 
\beqn
\mathfrak{sch}_n=\Fsl_2(\CC)\ltimes \Fh_n.
\eeqn
We choose $\Fp=\Fh_n$. Let $\phi:\Fp\rightarrow\CC$ be a nonzero Lie algebra homomorphism (implying that  $\phi(z)=0$). For any two-dimensiomal subalgebra $\mathfrak{a}$ of $\Fsl_2(\CC)$, it is easy to verify that  $\Fp=[\mathfrak{a},\Fp]\oplus\CC z$. Since $\phi\neq0$ and $\phi(z)=0$, we must have $\phi\([\mathfrak{a},\mathfrak{p}]\)\neq0$. 
It follows that $\dim\mathfrak{sch}_n^{\phi}/\Fp\le 1$. 

From above the discussions, using Corollary \ref{C3.12} we can deduce the following theorem, which was obtained in \cite{CC,CW} by concrete calculations.

\begin{thm}
	Let $\Fg$ be the conformal Galilei algebra $\Fg^{(\ell)}$ or $n$-th Schrödinger algebra $\mathfrak{sch}_n$. Assume that $\phi:\Fp\rightarrow \CC$ is a nonzero Lie algebra homomorphism. Then each irreducible quasi-Whittaker module of type $\phi$ is bland, i.e., is of the form of either $W(\phi)$ or $V(\phi')$ for some extension $\phi'$ of $\phi$.
\end{thm}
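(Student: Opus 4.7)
The plan is to reduce the claim to Theorem \ref{T3.10} by verifying in each case that $\dim \Fg^{\phi}/\Fp \le 1$. The excerpt has already established that both algebras split as $\Fg = \Fsl_2(\CC) \ltimes \Fp$, and by (\ref{F3.6}) we have $\Fg^{\phi} = (\Fsl_2(\CC) \cap \Fg^{\phi}) \ltimes \Fp$. Hence the whole task reduces to showing that $\dim(\Fsl_2(\CC) \cap \Fg^{\phi}) \le 1$ for every nonzero Lie algebra homomorphism $\phi:\Fp\to\CC$.

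I would argue by contradiction. Suppose $\dim(\Fsl_2(\CC) \cap \Fg^{\phi}) \ge 2$. Since $\Fg^{\phi}$ is a subalgebra by Lemma \ref{L2.1}, the intersection $\Fsl_2(\CC) \cap \Fg^{\phi}$ is itself a subalgebra of $\Fsl_2(\CC)$ of dimension at least $2$, so it contains a $2$-dimensional subalgebra $\mathfrak{a}$. All $2$-dimensional subalgebras of $\Fsl_2(\CC)$ are conjugate under the adjoint $SL_2$-action to the standard Borel $\Span_{\CC}\{h,e\}$, and since this action preserves each finite-dimensional $\Fsl_2(\CC)$-module, the bracket $[\mathfrak{a},\Fp]$ can be computed up to $SL_2$-conjugacy on the Borel. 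A direct check on $\Span_{\CC}\{h,e\}$ using the explicit brackets listed in the excerpt gives $[\mathfrak{a},\Fp] = \Fp$ in the conformal Galilei case (since $\Fp$ is the irreducible $(2\ell+1)$-dimensional $\Fsl_2(\CC)$-module), and $\Fh_n = [\mathfrak{a},\Fh_n] \oplus \CC z$ in the Schrödinger case (since $\Fh_n$ decomposes as $n$ copies of the natural $2$-dimensional irrep plus the trivial summand $\CC z$). The excerpt actually asserts both facts just before the theorem statement.

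From the defining property (\ref{F2.2}) of $\Fg^{\phi}$, $\mathfrak{a} \subseteq \Fg^{\phi}$ yields $\phi([\mathfrak{a},\Fp]) = 0$. In the conformal Galilei case this instantly forces $\phi(\Fp) = 0$, contradicting $\phi \neq 0$. In the Schrödinger case, since $\phi$ is a Lie algebra homomorphism into the abelian $\CC$ we have $\phi(z) = \phi([x_1,y_1]) = 0$, and combined with $\phi([\mathfrak{a},\Fh_n]) = 0$ and the decomposition $\Fh_n = [\mathfrak{a},\Fh_n] \oplus \CC z$ this again gives $\phi = 0$, a contradiction. Therefore $\dim(\Fsl_2(\CC) \cap \Fg^{\phi}) \le 1$, so $\dim \Fg^{\phi}/\Fp \le 1$. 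Applying Theorem \ref{T3.10} (together with the classification of bland modules in Theorem \ref{T3.11}, or equivalently Corollary \ref{C3.12}) then yields that every irreducible quasi-Whittaker module of type $\phi$ is isomorphic either to $W(\phi)$ or to $V(\phi')$ for some extension $\phi'$ of $\phi$.

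The only step requiring real care is the verification that $[\mathfrak{a},\Fp]$ has the claimed form uniformly in the choice of $2$-dimensional subalgebra $\mathfrak{a}$. I would handle this cleanly via $SL_2$-conjugacy and a single computation on the standard Borel; the rest of the argument is bookkeeping. Note that both cases are genuinely ``extreme'' in the sense that $\Fp$ is either irreducible as an $\Fsl_2(\CC)$-module or nearly so (with a $1$-dimensional central summand that $\phi$ is forced to annihilate), which is what makes $\dim \Fg^{\phi}/\Fp$ so small.
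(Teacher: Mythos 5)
Your proposal is correct and follows essentially the same route as the paper: both verify $\dim\Fg^{\phi}/\Fp\le 1$ by showing that any two-dimensional subalgebra $\mathfrak{a}$ of $\Fsl_2(\CC)$ satisfies $[\mathfrak{a},\Fp]=\Fp$ (conformal Galilei case) or $\Fh_n=[\mathfrak{a},\Fh_n]\oplus\CC z$ (Schrödinger case), forcing $\phi([\mathfrak{a},\Fp])\ne 0$ and hence $\mathfrak{a}\not\subseteq\Fg^{\phi}$, and then apply Theorem \ref{T3.10} together with Corollary \ref{C3.12}. The only difference is that you supply the $SL_2$-conjugacy reduction to the standard Borel $\Span\{h,e\}$ as justification for the uniform computation of $[\mathfrak{a},\Fp]$, a step the paper takes for granted.
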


\begin{rem}
	When  $n = \infty$, we can  call the corresponding Schrödinger algebra as the $\infty$-th Schrödinger algebra. In fact, the conclusion of Theorem 4.1 also holds  for the $\infty$-th Schrödinger algebra.
\end{rem}

\subsection{The mirror Heisenberg-Virasoro algebra}

Following \cite{Ba}, the mirror Heisenberg-Virasoro algebra $\mathcal{D}$ is a Lie algebra with a basis
\[
\left\{ d_m, h_r, \mathbf{c}, \mathbf{l} \,\bigg|\, m \in \mathbb{Z}, r \in \frac{1}{2} + \mathbb{Z} \right\}
\]
subject to the following commutation relations:
\begin{align*}
	&[d_m, d_n] = (m - n)d_{m+n} + \frac{m^3 - m}{12} \delta_{m+n,0} \mathbf{c},  \\
	&[d_m, h_r] = -r h_{m + r}, \quad [h_r, h_s] = r \delta_{r + s, 0} \mathbf{l}, \quad
	[\mathbf{c}, \mathcal{D}] = [\mathbf{l}, \mathcal{D}] = 0,
\end{align*}
for $m, n \in \mathbb{Z}$ and $r, s \in \frac{1}{2} + \mathbb{Z}$. Let
\begin{equation*}
	\mathcal{V}=\Span_{\CC}\{d_m,\mathbf{c}\mid m\in\ZZ\},~\mathcal{H}=\Span_\CC\{h_r, \mathbf{l} \mid r \in \frac{1}{2} + \mathbb{Z}\}.
\end{equation*}
Then $\mathcal{V}$ is the Virasoro algebra and $\mathcal{H}$ is the twisted Heisenberg algebra. Moreover, one has $\mathcal{D}=\mathcal{V}\ltimes \mathcal{H}.$ We take $\Fp=\CC \mathbf{c}\oplus\mathcal{H}$. Then $\Fp$ is an ideal of $\mathcal{D}$. 

	Let $\phi:\mathfrak{p}\to\mathbb{C}$ be a Lie algebra homomorphism. Since $[\mathfrak{p},\mathfrak{p}]=\mathbb{C}\mathbf{l}$, then $\phi(\mathbf{l}) = 0$. Following \cite{BZ}, we say $\phi$ is an exp-polynomial function if there exists nonzero distinct complex numbers $a_1,a_2,\cdots,a_\ell$ and polynomials $f_1(t),f_2(t),\cdots,f_\ell(t)\in\mathbb{C}[t]$ such that
	\[
	\phi(h_{\frac{1}{2}+n})=a_1^nf_1(n)+a_2^nf_2(n)+\cdots+a_\ell^nf_\ell(n),\quad\forall n\in\mathbb{Z}.
	\]
	The following lemma can be deduced from \cite[Lemma 3.3]{CGT}.
	\begin{lem}\label{lem4.2}
		Let $\phi:\mathfrak{p}\to\mathbb{C}$ be a   linear map  such that $\phi(\mathbf{c})=0$. Then $\phi$ is an exp-polynomial function if and only if there exists some $c_0,c_1,\cdots,c_{r - 1}\in\mathbb{C}$ with $c_0\neq0, r\ge1$ such that
		\[
		c_0\phi(h_{\frac{1}{2}+n})+c_1\phi(h_{\frac{1}{2}+n + 1})+\cdots+c_{r-1}\phi(h_{\frac{1}{2}+n + r-1})+\phi(h_{\frac{1}{2}+n + r}) = 0,\quad\forall n\in\mathbb{Z}.\tag{4.1}
		\]
	\end{lem}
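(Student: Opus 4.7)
The plan is to view $(\phi(h_{\frac{1}{2}+n}))_{n\in\ZZ}$ as a bi-infinite scalar sequence $a_n$ and exploit the classical correspondence between exp-polynomial sequences and solutions of reversible linear recurrences. Let $S$ denote the shift operator on bi-infinite sequences, $(Sa)_n = a_{n+1}$. The recurrence condition $(4.1)$ is precisely $P(S)a = 0$ where $P(x) = x^r + c_{r-1}x^{r-1} + \cdots + c_0$, and the hypothesis $c_0 \neq 0$ says exactly that $P(0) \neq 0$, i.e.\ $0$ is not a root of $P$; this is what makes the recurrence reversible.

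For the ``only if'' direction, assume $a_n = \sum_{i=1}^{\ell} a_i^n f_i(n)$ with distinct nonzero $a_i$ and $f_i \in \CC[t]$. A direct induction on $\deg f_i$ shows that $(S - a_i)^{\deg f_i + 1}$ annihilates the sequence $n \mapsto a_i^n f_i(n)$. Hence $P(S) := \prod_{i=1}^{\ell}(S - a_i)^{\deg f_i + 1}$ annihilates $a$. Expanding $P(x)$ yields the required recurrence; its constant term $P(0) = \prod_i (-a_i)^{\deg f_i + 1}$ is nonzero because all $a_i \neq 0$, giving $c_0 \neq 0$.

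For the ``if'' direction, factor $P(x) = \prod_{i=1}^{\ell}(x - b_i)^{m_i}$ over $\CC$ with distinct roots $b_i$. Since $P(0) = c_0 \neq 0$, each $b_i \neq 0$. Reversibility of the recurrence implies that a bi-infinite solution is uniquely determined by any $r$ consecutive values, so the solution space of $P(S)v=0$ is exactly $r$-dimensional, where $r = \sum m_i$. By the forward direction, each sequence $n \mapsto b_i^n n^k$ with $0 \leq k < m_i$ lies in this space, providing $r$ candidate solutions. Their linear independence as functions on $\ZZ$ follows from the same shift-operator trick applied in reverse: any nontrivial relation can be collapsed block by block using the operators $(S - b_j)^{m_j}$. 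Thus these sequences form a basis, so $a_n = \sum_i b_i^n f_i(n)$ for suitable polynomials $f_i$, which is the desired exp-polynomial expression.

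The main obstacle will be establishing the linear independence of $\{b_i^n n^k : 1\le i\le\ell,\, 0\le k<m_i\}$ on $\ZZ$ in a clean way, together with the dimension count for the bi-infinite solution space (this is where reversibility is essential; without $c_0\neq 0$ one could get strictly larger spaces, or one-sided sequences that fail to admit the desired exp-polynomial extension). Once those two ingredients are in place, the proof is essentially a packaging of the shift-operator annihilation argument in both directions, and should be deducible from the analogous statement in \cite[Lemma 3.3]{CGT}.
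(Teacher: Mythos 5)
The paper does not actually prove this lemma: it is dispatched with a single line, ``The following lemma can be deduced from \cite[Lemma 3.3]{CGT},'' so there is no internal argument to compare against. Your proposal supplies a self-contained proof instead, via the classical correspondence between exp-polynomial bi-infinite sequences and reversible linear recurrences. The outline is correct: setting $a_n = \phi(h_{\frac12+n})$, the recurrence $(4.1)$ is $P(S)a=0$ with $P$ monic of degree $r$ and $P(0)=c_0\neq0$; the ``only if'' direction follows because $\prod_i(S-a_i)^{\deg f_i+1}$ annihilates $a$ and has nonzero constant term $\prod_i(-a_i)^{\deg f_i+1}$; the ``if'' direction follows because reversibility (solvability for $a_n$ in terms of $a_{n+1},\dots,a_{n+r}$, using $c_0\neq 0$) makes the bi-infinite solution space exactly $r$-dimensional, and the $r$ sequences $n\mapsto b_i^n n^k$, $0\le k<m_i$, lie in it and are linearly independent (each block spans $\ker(S-b_i)^{m_i}$, and the blocks are in direct sum since the $(x-b_i)^{m_i}$ are pairwise coprime). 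The two ``obstacles'' you flag---linear independence and the dimension count---are both standard and do go through as you describe. The trade-off: the paper's citation is shorter and relies on \cite{CGT}; your argument makes the statement independent of the external reference and makes transparent exactly where the hypothesis $c_0\neq 0$ is used (it is what excludes $0$ as a root of $P$ and what makes the recurrence run in both directions, so the bi-infinite solution space has the ``right'' dimension).
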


\begin{thm}
	Let $\phi:\Fp\rightarrow\CC$ be a  Lie algbera homomorphism  {with $\phi(\mathcal{H})\ne0$}. Then $W(\phi)$ is irreducible if and only if  $\phi$ is not an exp-polynomial function. 
\end{thm}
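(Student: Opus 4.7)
The plan is to combine Theorem \ref{T3.5} with Lemma \ref{lem4.2}. By Theorem \ref{T3.5}, $W(\phi)$ is irreducible if and only if $\mathcal{D}^{\phi}=\Fp$, so I would recast the theorem as: $\mathcal{D}^{\phi}\supsetneq\Fp$ if and only if $\phi$ is an exp-polynomial function. Since $\{d_m\mid m\in\ZZ\}$ spans a vector-space complement to $\Fp=\CC\mathbf{c}\oplus\mathcal{H}$ inside $\mathcal{D}$ and $\Fp\subseteq\mathcal{D}^{\phi}$ automatically, the condition $\mathcal{D}^{\phi}\supsetneq\Fp$ is equivalent to the existence of a nonzero finite combination $d=\sum_m c_m d_m$ with $\phi([d,p])=0$ for every $p\in\Fp$.

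I would then simplify this annihilation condition. Because $\mathbf{c}$ and $\mathbf{l}$ are central, the constraint on $d$ only comes from the brackets $[d,h_r]=-r\sum_m c_m h_{m+r}$ with $r\in\tfrac12+\ZZ$. Since no such $r$ is zero, the condition $\phi([d,h_r])=0$ for all $r$ becomes
\[
\sum_m c_m\,\phi(h_{m+r})=0,\qquad \forall\,r\in\tfrac12+\ZZ.
\]
Setting $s_n:=\phi(h_{\frac12+n})$ and letting $r=\frac12+n$, the above says that the bi-infinite sequence $(s_n)_{n\in\ZZ}$ satisfies a nontrivial linear recurrence with constant coefficients and finite support.

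The final step is to match this with the form of Lemma \ref{lem4.2}. Given any nontrivial recurrence $\sum_{j=m_1}^{m_k}c_j s_{n+j}=0$ with $c_{m_1},c_{m_k}\neq 0$, I would shift indices $n\mapsto n-m_1$ to move the support into $\{0,1,\ldots,m_k-m_1\}$ and then rescale so that the leading (highest-index) coefficient equals $1$. Intermediate coefficients are allowed to be zero, which matches equation (4.1) exactly; conversely, any relation of the form (4.1) is already a nontrivial finite-support linear recurrence on $(s_n)$. Hence such a recurrence exists if and only if $\phi$ is an exp-polynomial function by Lemma \ref{lem4.2}, completing the chain of equivalences.

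The only subtle point is the correspondence between the general recurrences arising from arbitrary elements $d=\sum c_m d_m\in\mathcal{D}^\phi\setminus\Fp$ and the normalized form in Lemma \ref{lem4.2}; this is handled by the shift-and-rescale argument above. The hypothesis $\phi(\mathcal{H})\neq 0$ ensures that we are not in the degenerate situation where $s_n\equiv 0$ and every $d_m$ trivially lies in $\mathcal{D}^\phi$, so that the non-exp-polynomial characterization of irreducibility is meaningful.
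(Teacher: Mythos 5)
Your proposal is correct and follows essentially the same route as the paper: reduce irreducibility to $\mathcal{D}^{\phi}=\Fp$ via Theorem \ref{T3.5}, translate membership of a nonzero $d=\sum_m c_m d_m$ in $\mathcal{D}^{\phi}$ into a constant-coefficient linear recurrence on $s_n=\phi(h_{\frac12+n})$, and apply Lemma \ref{lem4.2}. The shift-and-rescale normalization and the remark on $\phi(\mathcal{H})\neq 0$ excluding the single-term degenerate case are both handled correctly and match the paper's reasoning.
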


\begin{proof}
	Assume that $\phi$ is not an exp-polynomial function. Suppose that $\mathcal{D}^\phi\neq\Fp$, that is,  there exists nonzero
	\beqn
	x=c_0d_{m-r}+c_1d_{m-r+1}+\cdots+c_{r-1}d_{m-1}+d_m\in\mathcal{D}^{\phi}.
	\eeqn
	Then 
	\beqn
	0=\phi([x,h_{\frac{1}{2}+n}])=-\(\frac{1}{2}+n\)\(c_0\phi(h_{\frac{1}{2}+n+m-r})+\cdots+c_{r-1}\phi(h_{\frac{1}{2}+n+m-1})+\phi(h_{\frac{1}{2}+n+m})\)
	\eeqn
	holds for each $n\in\ZZ$, thus
	\beqn
	c_0\phi(h_{\frac{1}{2}+n-r})+c_1\phi(h_{\frac{1}{2}+n-r+1})+\cdots+c_{r-1}\phi(h_{\frac{1}{2}+n-1})+\phi(h_{\frac{1}{2}+n})=0,~\forall n\in\ZZ.
	\eeqn
	By Lemma \ref{lem4.2}, $\phi$ must be an exp-polynomial function contadicting the assumption. So $\mathcal{D}^\phi=\Fp$, and $W(\phi)$ is irreducible by Theorem \ref{T3.5}.
	
	Assume that $W(\phi)$ is irreducible, i.e. $\mathcal{D}^\phi=\Fp$  by Theorem \ref{T3.5}. Suppose that $\phi$ is an exp-polynomial function. Then by Lemma \ref{lem4.2}, there 
	exists some $c_0,c_1,\cdots,c_{r-1}\in\CC$ with $c_0\neq 0$ such that
	\beqn
	c_0\phi(h_{\frac{1}{2}+n-r})+c_1\phi(h_{\frac{1}{2}+n-r+1})+\cdots+c_{r-1}\phi(h_{\frac{1}{2}+n-1})+\phi(h_{\frac{1}{2}+n})=0,~\forall n\in\ZZ.
	\eeqn
	It follows that 
	\beqn
	c_0d_{-r}+c_1d_{-r+1}+\cdots+c_{r-1}d_{-1}+d_0\in\mathcal{D}^{\phi},
	\eeqn
	a contradiction.  So $\phi$ is not an exp-polynomial function.
\end{proof}

\subsection{The Heisenberg-Virasoro algebra}

 Following \cite{ADKP}, the Heisenberg-Virasoro algebra $\mathcal{L}$ is a Lie algebra with a basis $\{L_n, I_n, z_1, z_2, z_3\mid n\in\mathbb{Z}\}$ defined by the following commutation relations
  \begin{align*}
 	&[L_m,L_n]=(n - m)L_{m + n}+\frac{m^3 - m}{12}\delta_{m + n,0}z_1, \quad [I_m,I_n]=m\delta_{m + n,0}z_3,\\
 	&[L_m,I_n]=nI_{m + n}+\delta_{m + n,0}(m^2 + m)z_2, \quad [I_0,\mathcal{L}]=[z_1,\mathcal{L}]=[z_2,\mathcal{L}]=[z_3,\mathcal{L}]=0.
 \end{align*}
 Let
\begin{equation*}
	\mathcal{V}=\Span_{\CC}\{L_m,z_1\mid m\in\ZZ\},~\mathcal{H}=\Span_\CC\{I_r, z_3 \mid r \in  \mathbb{Z}\}.
\end{equation*}
Then $\mathcal{V}$ is the Virasoro algebra and $\mathcal{H}$ is the  Heisenberg algebra. 
 We take the ideal $\Fp$ to be
 \beqn
 \Fp=\Span_{\CC}\{I_n,z_1,z_2,z_3\mid n\in\ZZ\}.
 \eeqn
 Then $[\Fp,\Fp]=\CC z_3$. Take a   Lie algbera homomorphism $\phi:\Fp\to \mathbb{C}$, implying that  $\phi(z_3)=0$.  We set
 \beqn
 S^\phi=\{n\in\ZZ\mid \phi(I_n)\neq 0\}.
 \eeqn
 We call $\phi$ {\bf finite} if $|S^{\phi}|<+\infty$ and $\phi(z_2)=0$. 
 
\begin{ex}\label{ex41}
	For a fixed $k\in \mathbb{Z}$, let $\phi:\Fp\rightarrow \CC$ be a Lie algbera homomorphism such that
	\beqn
	\phi(I_k)=1,~\phi(I_n)=0,~\forall n\neq k; \phi(z_1)=\phi(z_2)=\phi(z_3)=0.
	\eeqn
	Then $\mathcal{L}^\phi=\CC L_k\oplus\Fp.$ In addition, each irreducible quasi-Whittaker module of type $\phi$ is of the form of  $V(\phi')$ for some extension $\phi'$ of $\phi$.
\end{ex}
\begin{proof}
	Assume that 
	\beqn
	x=L_m+\la_1 L_{m+1}+\cdots+\la_{\ell}L_{m+\ell}\in \Fg^{\phi}.
	\eeqn
	Then
	\beqn
	m-k=-\phi([x,I_{-m+k}])=0.
	\eeqn
	Thus, $x$ is expressed as
	\beqn
	x=L_k+\la_1 L_{m+1}+\cdots+\la_{\ell}L_{m+\ell}.
	\eeqn
	For $1\le i\le \ell$, one has
	\beqn
	\la_i=-\frac{1}{i}\phi([x,I_{-i}])=0.
	\eeqn
	We can obtain that 
	\beqn
	\mathcal{L}^\phi=\CC L_k\oplus\Fp.
	\eeqn
Then, according to Corollary \ref{C3.12}, the proof is completed.
\end{proof}
 
 \begin{thm}\label{HV}
 	Let $\phi:\Fp\rightarrow\CC$ be  a finite Lie algbera homomorphism {with $\phi(\mathcal{H})\ne0$}. Then $W(\phi)$ is irreducible  if and only if $|S^{\phi}|\ge2$.
 \end{thm}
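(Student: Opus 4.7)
The plan is to invoke Theorem~\ref{T3.5}, which reduces everything to computing when $\mathcal{L}^{\phi}=\Fp$. Since $\Fp$ is always contained in $\mathcal{L}^{\phi}$ and $\mathcal{L}=\Fp\oplus\Span_{\CC}\{L_n\mid n\in\ZZ\}$ as a vector space, I only need to determine which nonzero finite combinations $x=\sum_i c_i L_i$ satisfy $\phi([x,p])=0$ for all $p\in\Fp$. Because $z_1,z_2,z_3$ are central and $\phi(z_2)=0$ by finiteness of $\phi$, the only nontrivial constraints come from brackets with $I_n$'s, and $[L_m,I_n]=nI_{m+n}+\delta_{m+n,0}(m^2+m)z_2$ gives $\phi([L_m,I_n])=n\phi(I_{m+n})$. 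So the condition on $x=\sum_i c_i L_i$ becomes
\beqn
\sum_{i}c_i\,\phi(I_{n+i})=0\quad\text{for every }n\in\ZZ\setminus\{0\}.
\eeqn

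The key idea is to encode this using Laurent polynomials. Set $c(z)=\sum_i c_i z^i$ and $f(z)=\sum_{k\in\ZZ}\phi(I_k)z^k$; both are Laurent polynomials in $\CC[z,z^{-1}]$, the second because $S^{\phi}$ is finite. A direct expansion shows that the coefficient of $z^n$ in $c(1/z)\,f(z)$ equals $\sum_i c_i\phi(I_{n+i})$. Thus $x\in\mathcal{L}^{\phi}$ (modulo $\Fp$) if and only if $c(1/z)f(z)$ is a constant in $\CC$.

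Now suppose $|S^{\phi}|\ge 2$, so $f(z)$ is not a monomial. If $c(1/z)f(z)$ is a nonzero constant, then $f(z)$ is a unit in $\CC[z,z^{-1}]$; but the units of this ring are exactly the monomials $\lambda z^k$, contradicting $|S^{\phi}|\ge 2$. If $c(1/z)f(z)=0$, then since $f\ne 0$ we get $c=0$. In either case no nonzero $x=\sum c_i L_i$ lies in $\mathcal{L}^{\phi}$, so $\mathcal{L}^{\phi}=\Fp$ and $W(\phi)$ is irreducible. Conversely, if $S^{\phi}=\{k\}$ is a singleton, then a direct check using $\phi([L_k,I_n])=n\phi(I_{k+n})$ shows $L_k\in\mathcal{L}^{\phi}\setminus\Fp$, so $W(\phi)$ is reducible.

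I do not expect a serious obstacle here: the reduction to $\mathcal{L}^{\phi}=\Fp$ is immediate from Theorem~\ref{T3.5}, and the finiteness hypothesis ($\phi(z_2)=0$ and $S^{\phi}$ finite) is exactly what is needed so that the commutator condition converts cleanly into an equation $c(1/z)f(z)=\text{const}$ in the Laurent polynomial ring. The only mildly delicate point is making sure the $z_2$-contribution vanishes (handled by the definition of \emph{finite}) and that the polynomial correspondence is precisely bidirectional, which is a routine coefficient comparison.
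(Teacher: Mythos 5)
Your proof is correct, and it takes a genuinely different route from the paper's. The paper argues directly and inductively: after reducing to showing $\mathcal{L}^{\phi}=\Fp$ via Theorem~\ref{T3.5}, it picks the smallest index $i$ and largest index $j$ of $S^{\phi}$, writes a hypothetical element $x=L_m+\sum_{k\ge1}\lambda_k L_{m+k}\in\mathcal{L}^{\phi}\cap\Fg_0$, first shows $m\le i$ by repeatedly pairing $x$ against $I_{i-m}$, $I_{i-m-t_1}$, \ldots\ to force an infinite chain of nonzero $\lambda_{t_1}<\lambda_{t_2}<\cdots$ (contradiction with finiteness of the sum), and then derives a second contradiction from $\phi([x,I_{j-m}])=(j-m)\phi(I_j)\neq0$. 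Your argument instead packages the constraint $\sum_i c_i\phi(I_{n+i})=0$ (for $n\neq0$) into the single statement ``$c(1/z)f(z)$ is constant in $\CC[z,z^{-1}]$'' and then invokes two clean ring-theoretic facts: $\CC[z,z^{-1}]$ is an integral domain, and its units are exactly the monomials. This is shorter, avoids the somewhat delicate induction, and makes the role of the finiteness hypothesis transparent (it is exactly what makes $f$ a Laurent polynomial). The paper's argument, by contrast, generalizes more readily to settings where a generating-function formalism is unavailable or awkward. One small remark: you might note explicitly that the hypothesis $\phi(\mathcal{H})\neq0$, together with $\phi(z_3)=0$ forced by $[\Fp,\Fp]=\CC z_3$, is what rules out $S^{\phi}=\emptyset$, so that the two cases $|S^{\phi}|\ge2$ and $|S^{\phi}|=1$ you treat are exhaustive.
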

 
 \begin{proof}
 	Assume that $|S^{\phi}|\ge2$. Let $i$ be the  smallest element in $S^{\phi}$ and $j$ the largest one. Suppose  that $\mathcal{L}^{\phi}\neq \Fp$. Then one can choose 
 	\beqn
 	x=L_m+\sum_{k=1}^{+\infty}\la_k L_{m+k}\in \mathcal{L}^{\phi},
 	\eeqn
 	where $\la_k=0$ for almost all $k\geq 1$. We claim that $m\le i$. Otherwise, one has
 	\beqn
 	0=\phi([x,I_{i-m}])=(i-m)\(\phi(I_i)+\sum_{k=1}^{\infty}\la_k\phi(I_{i+k})\),
 	\eeqn
 	thus
 	\beqn
 	\phi(I_i)+\sum_{k=1}^{+\infty}\la_k\phi(I_{i+k})=0.
 	\eeqn
 	It is clear that there exists some $t_1\geq 1$ such that $\la_{t_1}\neq 0$. Again, we have
 	\beqn
 	0=\phi([x,I_{i-m-t_1}])=(i-m-t_1)\(\la_{t_1}\phi(I_{i})+\sum_{k=t_1+1}^{+\infty}\la_k\phi(I_{i+k-t_1})\),
 	\eeqn
 	thus
 	\beqn
 	\la_{t_1}\phi(I_i)+\sum_{k=t_1+1}^{+\infty}\la_k\phi(I_{i+k-t_1})=0.
 	\eeqn
 	We can find $t_2>t_1$ such  that $\la_{t_2}\neq 0$. Repeating the above process, we finally obtain a sequence of nonzero numbers 
 	$\la_{t_1},\la_{t_2},\la_{t_3},\cdots,$ a contradiction.  Now we have $m\le i<j$. But this yields that
 	\beqn
 	\phi([x,I_{j-m}])=(j-m)\phi(I_j)\neq 0,
 	\eeqn
 	also a contradiction.  Hence, we must have $\mathcal{L}^{\phi}=\Fp$, which implies that $W(\phi)$ is irreducible.
 	
 	Next, we assume that $W(\phi)$ is irreducible. Then $\mathcal{L}^{\phi}=\Fp$. If $|S^{\phi}|=0$, i.e., $S^{\phi}=\emptyset$, then $\mathcal{L}^\phi=\mathcal{L}\supsetneq \Fp$, a contradiction.  If $|S^{\phi}|=1$, say $S^{\phi}=\{n\}$, then it is easy to check that $L_n\in\mathcal{L}^{\phi}$, which implies that $\mathcal{L}^\phi\supsetneq \Fp$, a contradiction (Also see Example \ref{ex41} ). The proof is completed.
 \end{proof}

\subsection{Planer Galilei Conformal algebra}

 Following \cite{BG}, the planer Galilean conformal algebra  $\mathcal{G}$ is a Lie algebra with basis $\{L_n, H_n, I_n, J_n \mid n \in \mathbb{Z}\}$ (different from that in \cite{BG}) defined by the following commutation relations
 \begin{align*}
	&[L_n, L_m] = (m - n)L_{m+n},  [L_n, H_m] = mH_{m+n}, \notag \\
	&[L_n, I_m] = (m - n)I_{m+n},  [L_n, J_m] = (m - n)J_{m+n}, \notag \\
	&[H_n, I_m] = I_{m+n},  [H_n, J_m] = -J_{m+n}, \notag \\
	&[H_n, H_m] = [I_n, I_m] = [J_n, J_m] = [I_n, J_m] = 0, \quad \forall m, n \in \mathbb{Z}. \label{eq:2.4}
\end{align*}

Let 
\beqn
\Fp=\Span_{\CC}\{H_n,I_n,J_n\mid n\in\ZZ\}.
\eeqn
Then $[\Fp,\Fp]=\Span_{\CC}\{I_n,J_n\mid n\in\ZZ\}$. Hence, for any Lie algebra homomorphism $\phi:\Fp\to \mathbb{C}$  we have $\phi(I_n)= \phi(J_n)=0$ for any $n \in \mathbb{Z}$.  We set
\beqn
S^\phi=\{n\in\ZZ\mid \phi(H_n)\neq 0\}.
\eeqn
We call $\phi$ {\bf finite} if $|S^{\phi}|<+\infty$ . Since $I$ and $J$ are ideals of  $\mathcal{G}$, for the $\mathcal{G}$-module $W(\phi)$, we have $(I + J). W(\phi) = 0$. Therefore, the $\mathcal{G} $-module $W(\phi)$ can be regarded as a $\mathcal{G}/(I + J)$-module while preserving the same irreducibility. Consequently, we obtain a result analogous to Theorem \ref{HV} for the Heisenberg-Virasoro algebra, namely:

\begin{thm}
	Let $\phi:\Fp\rightarrow\CC$ be  a finite Lie algbera homomorphism. Then $W(\phi)$ is irreducible  if and only if $|S^{\phi}|\ge2$.
\end{thm}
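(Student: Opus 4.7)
The plan is to reduce the statement to Theorem \ref{HV}. First I would observe that $I := \Span_{\CC}\{I_n \mid n \in \ZZ\}$ and $J := \Span_{\CC}\{J_n \mid n \in \ZZ\}$ are ideals of $\mathcal{G}$ contained in $[\Fp,\Fp]$, so $\phi(I) = \phi(J) = 0$, and hence $I_n w_\phi = J_n w_\phi = 0$. Since $I$ and $J$ are ideals, $\U(\mathcal{G}) I = I\, \U(\mathcal{G})$ as two-sided ideals, so $I \cdot W(\phi) = \U(\mathcal{G}) I w_\phi = 0$ and similarly $J \cdot W(\phi) = 0$. Thus $W(\phi)$ factors through $\bar{\mathcal{G}} := \mathcal{G}/(I+J)$ with identical submodule lattice, so irreducibility is preserved, exactly as indicated in the paragraph preceding the theorem.

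Next I would identify $\bar{\mathcal{G}}$ with the centerless Heisenberg--Virasoro algebra. The quotient has basis $\{\bar L_n, \bar H_n \mid n \in \ZZ\}$ with brackets $[\bar L_n, \bar L_m] = (m-n) \bar L_{m+n}$, $[\bar L_n, \bar H_m] = m \bar H_{m+n}$ and $[\bar H_n, \bar H_m] = 0$, which under $\bar L_n \mapsto L_n$, $\bar H_n \mapsto I_n$ coincides with the quotient of the $\mathcal{L}$ of Section 4.3 by its centers. The image $\bar \Fp := \Span_{\CC}\{\bar H_n\}$ corresponds to the centerless version of the ideal chosen there, and the induced $\bar\phi \colon \bar\Fp \to \CC$ satisfies $\bar\phi(\bar H_n) = \phi(H_n)$, so $S^{\bar\phi} = S^\phi$, $\bar\phi$ is finite iff $\phi$ is, and $W(\phi) = W(\bar\phi)$ as $\bar{\mathcal{G}}$-modules.

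Then I would apply Theorem \ref{T3.5}, reducing irreducibility of $W(\bar\phi)$ to the equality $\bar{\mathcal{G}}^{\bar\phi} = \bar\Fp$, and carry over the argument of Theorem \ref{HV} verbatim. For the ``if'' direction, assuming $|S^\phi| \ge 2$ with $i = \min S^\phi$ and $j = \max S^\phi$, any hypothetical element $x = \bar L_m + \sum_{k \ge 1} \lambda_k \bar L_{m+k} \in \bar{\mathcal{G}}^{\bar\phi}$ of finite support must satisfy $m \le i$ via the iterated computation $\bar\phi([x, \bar H_{i-m-t}]) = 0$, which would otherwise produce an infinite sequence of nonzero $\lambda_{t_k}$; then $\bar\phi([x, \bar H_{j-m}]) = (j - m)\phi(H_j) \neq 0$ yields the final contradiction, forcing $\bar{\mathcal{G}}^{\bar\phi} = \bar\Fp$. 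For the converse, if $|S^\phi| = 0$ then $\bar\phi = 0$ so $\bar{\mathcal{G}}^{\bar\phi} = \bar{\mathcal{G}} \supsetneq \bar\Fp$; if $S^\phi = \{n\}$ then $\bar L_n \in \bar{\mathcal{G}}^{\bar\phi}$ since $\bar\phi([\bar L_n, \bar H_m]) = m \phi(H_{n+m})$ vanishes for every $m \in \ZZ$, again giving $\bar{\mathcal{G}}^{\bar\phi} \supsetneq \bar\Fp$. In either case $W(\phi)$ is reducible by Theorem \ref{T3.5}.

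The main obstacle is essentially bookkeeping: verifying that under the identification $\bar L_n \leftrightarrow L_n$, $\bar H_n \leftrightarrow I_n$ the sign conventions and index shifts in the brackets match those used in the proof of Theorem \ref{HV}, so that the iterative bracket argument transfers without any analytical modification. Once that matching is confirmed, no new idea is required.
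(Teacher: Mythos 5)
Your proposal is correct and follows exactly the reduction the paper itself uses: since $\phi$ kills $I$ and $J$ (they lie in $[\Fp,\Fp]$), $W(\phi)$ factors through $\mathcal{G}/(I+J)$, which is the centerless Heisenberg--Virasoro algebra, and then the statement becomes Theorem \ref{HV}. Your bookkeeping check on the bracket $[\bar L_n, \bar H_m] = m\bar H_{m+n}$ versus $[L_m, I_n] = nI_{m+n}$ is the right thing to verify, and the iterated-bracket argument and the one-element-support case transfer without change.
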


We certainly can use our theory in Sect.3 to  get a lot of simple $\mathcal{G}$-modules by taking 
$
\mathfrak{p}=\Span_{\CC}\{I_n,J_n\mid n\in\ZZ\}.
$

\subsection{The Lie algebra $\mathcal{W}(a,b)$}
  Following \cite{OR}, the Lie algebra $\W(a,b)$ is the semi-direct product of centerless Virasoro $\mathrm{Vir}$ and its intermediate series module $A_{a,b}$, i.e., $\W(a,b)=\mathrm{Vir}\ltimes A_{a,b}$ where $a,b\in\mathbb C$. As a vector space, $\W(a,b)$ has a basis $\{L_i,H_i\mid i\in\ZZ\}$ with Lie brackets
  \beqn
  [L_i,L_j]=(j-i)L_{i+j},~[L_i,H_j]=(a+j+bi)H_{i+j},~[H_i,H_j]=0, \forall, i, j\in\ZZ.
  \eeqn
  Note that $\W(0,-1)$ is just the Takiff algebra of the centerless Virasoro algebra.  
  We abbreviate 
  \beqn
  \Fg=\W(a,b),~\Fg_0=\Span_{\CC}\{L_i\mid i\in\ZZ\},~ \Fp=\Span_{\CC}\{H_i\mid i\in\ZZ\}.
  \eeqn 
  Since $[\Fp,\Fp]=0$,  each $\phi\in\Fp^*$ is a Lie algebra homomorphism from $\Fp$ to $\CC$. For $\phi\in\Fp^*$, we write
  \beqn
  S^{\phi}=\{j\in\ZZ\mid \phi(H_j)\neq0\}.
  \eeqn
  We call $\phi\in\Fp^*$ {\bf upper} (resp. {\bf lower}) {\bf finite} if there exists $j_0\in\ZZ$  such that  $\phi(H_j)=0$   for each $j>j_0$ (resp. $j<j_0$). 
  
   Let $x$ be a nonzero element in $\Fg_0$. Then $x$ can be uniquely expressed as
 \beq\label{F4.1}
 x=\la_iL_i+\la_{i+1}L_{i+1}+\cdots+\la_jL_j
 \eeq
 with $j\ge i$ and $\la_i\la_j\neq0$.  Define the {\bf head} (resp. {\bf tail}) of $x$ to be $\mathrm{h}(x)=i$ (resp. $\mathrm{t}(x)=j$).
 
 \begin{lem}\label{L4.5.1}
 	Let $\phi:\Fg\rightarrow\CC$ be a nonzero upper (resp. lower) finite Lie algbera homomorphism and $j_0$ the maximal (resp. miminal) element in $S^{\phi}$ and $x\in\Fg^{\phi}\cap\Fg_0$ a nonzero element.  Then
 	\beqn
 	a+j_0=(1-b)\mathrm{h}(x) ~~(\text{resp.}~ a+j_0=(1-b)\mathrm{t}(x) ).
 	\eeqn
 \end{lem}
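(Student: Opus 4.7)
The plan is to use the definition $\Fg^{\phi}=\{g\in\Fg\mid \phi([g,p])=0,\,\forall p\in\Fp\}$ together with the explicit bracket $[L_k,H_m]=(a+m+bk)H_{k+m}$, and then single out one component by choosing the test element $p=H_m$ with $m$ cleverly selected so that only one term in the resulting linear combination of $H$'s can have nonzero value under $\phi$. The maximality (resp. minimality) of $j_0$ in $S^{\phi}$ does precisely this job.

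Concretely, I would write $x$ as in \eqref{F4.1}, so that $x=\sum_{k=i}^{j}\la_k L_k$ with $\la_i\la_j\neq 0$, $i=\mathrm{h}(x)$, $j=\mathrm{t}(x)$. Using the bracket formula,
\beq
[x,H_m]=\sum_{k=i}^{j}\la_k(a+m+bk)H_{k+m},\quad\forall m\in\ZZ,
\eeq
and membership $x\in\Fg^{\phi}$ forces
\beq\label{planeq}
\sum_{k=i}^{j}\la_k(a+m+bk)\phi(H_{k+m})=0,\quad\forall m\in\ZZ.
\eeq
In the upper finite case, I would specialize $m=j_0-i$. The shifted index $k+m=k+j_0-i$ exceeds $j_0$ whenever $k>i$, hence lies outside $S^{\phi}$ by maximality of $j_0$, so $\phi(H_{k+m})=0$ for all $k>i$. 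The only surviving term in \eqref{planeq} is the $k=i$ one:
\beq
\la_i(a+(j_0-i)+bi)\phi(H_{j_0})=0.
\eeq
Since $\la_i\neq 0$ and $\phi(H_{j_0})\neq 0$ by definition of $j_0\in S^{\phi}$, I read off $a+j_0-i+bi=0$, i.e., $a+j_0=(1-b)i=(1-b)\mathrm{h}(x)$, as required.

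The lower finite case is handled in exactly the same way by choosing instead $m=j_0-j$ and using minimality of $j_0$ to kill all terms with $k<j$, leaving only $k=j=\mathrm{t}(x)$ and yielding $a+j_0=(1-b)\mathrm{t}(x)$. No serious obstacle is expected: the only thing to be careful about is the unidirectional bound (maximality versus minimality) that controls which end of the support of $x$ survives the index-shift; the rest is a one-line linear calculation.
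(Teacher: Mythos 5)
Your proof is correct and follows essentially the same approach as the paper: test $x$ against $H_{j_0-\mathrm{h}(x)}$ (resp.\ $H_{j_0-\mathrm{t}(x)}$) so that maximality (resp.\ minimality) of $j_0$ kills all but one term, then read off the linear relation from nonvanishing of $\la_i$ (resp.\ $\la_j$) and $\phi(H_{j_0})$. The paper states the computation more tersely and only writes out the upper finite case, but the argument is the same.
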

 
 \begin{proof}
 	We only prove the case that $\phi$ is upper finite. Assume that $x$ is shown as \eqref{F4.1}. Then 
 	\beqn
 	0=\phi\([x,H_{j_0-i}]\)=\la_i\(a+j_0-i+bi\)\phi(H_{j_0}).
 	\eeqn
 	Since $\la_i$ and $\phi(H_{j_0})$ are both nonzero, we can obtain $a+j_0-i+bi=0$, i.e. 
 	\beqn
 	a+j_0=(1-b)\mathrm{h}(x).
 	\eeqn
 	The proof is complete.
 \end{proof}

 \begin{prop}
 	Assume that $b=1$. Let $\phi:\Fg\rightarrow\CC$ be a nonzero upper (resp. lower) finite Lie algbera homomorphism and $j_0$ the maximal (resp. miminal) element in $S^{\phi}$. Then $W(\phi)$ is reducible if and only if $S^{\phi}=\{j_0\}$ and $a=-j_0$.
 \end{prop}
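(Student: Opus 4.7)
The plan is to reduce everything to Theorem \ref{T3.5} and a short linear recurrence argument. By Theorem \ref{T3.5}, $W(\phi)$ is reducible if and only if $\Fg^\phi \ne \Fp$. In the decomposition $\Fg = \Fg_0 \oplus \Fp$ with $\Fg_0 = \Span_\CC\{L_i \mid i \in \ZZ\}$, and since $\Fp \subseteq \Fg^\phi$, this is equivalent to the existence of a nonzero element $x \in \Fg^\phi \cap \Fg_0$. Writing such an $x = \lambda_i L_i + \cdots + \lambda_j L_j$ with $\lambda_i\lambda_j \ne 0$, and using the relation $[L_k, H_m] = (a + m + k)H_{m+k}$ (which exploits $b = 1$), the condition $x \in \Fg^\phi$ becomes
\begin{equation*}
\sum_{k=i}^{j} \lambda_k (a + m + k) \phi(H_{m+k}) = 0, \qquad \forall\, m \in \ZZ.
\end{equation*}

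For the ``if'' direction, I would assume $S^\phi = \{j_0\}$ and $a = -j_0$. Then in the displayed equation the only index $m + k$ for which $\phi(H_{m+k}) \ne 0$ is $m + k = j_0$, but there the coefficient $a + m + k = a + j_0$ also vanishes. Thus every $x \in \Fg_0$ satisfies the constraint, so $\Fg^\phi = \Fg \supsetneq \Fp$ and $W(\phi)$ is reducible.

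For the ``only if'' direction, I would start from a nonzero $x \in \Fg^\phi \cap \Fg_0$ as above. Lemma \ref{L4.5.1} with $b = 1$ immediately gives $a = -j_0$, in both the upper- and lower-finite cases. Substituting back and defining $\psi(n) := (n - j_0)\phi(H_n)$, the displayed system collapses to
\begin{equation*}
\lambda_i \psi(n) + \lambda_{i+1}\psi(n+1) + \cdots + \lambda_j\psi(n + j - i) = 0, \qquad \forall\, n \in \ZZ.
\end{equation*}
This is a bi-directional linear recurrence, since both $\lambda_i$ and $\lambda_j$ are nonzero. In the upper-finite case, $\phi(H_n) = 0$ for $n > j_0$ forces $\psi(n) = 0$ for $n > j_0$, and $\psi(j_0) = 0$ automatically. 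Solving the recurrence downward for $\psi(n)$ at indices $n \le j_0 - 1$ (using $\lambda_i \ne 0$), a straightforward induction on $j_0 - n$ shows $\psi(n) = 0$ for every $n \in \ZZ$. Hence $\phi(H_n) = 0$ for all $n \ne j_0$, i.e., $S^\phi = \{j_0\}$. The lower-finite case is symmetric, propagating the recurrence upward from the zeros of $\psi$ on $\{n \le j_0\}$ by means of $\lambda_j \ne 0$.

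The only delicate point is the induction step in the last paragraph, but both propagation directions are completely routine because the boundary coefficients $\lambda_i$ and $\lambda_j$ of the recurrence are invertible. Once $a = -j_0$ is secured via Lemma \ref{L4.5.1}, the rest of the argument reduces to bookkeeping against the displayed identity.
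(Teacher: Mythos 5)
Your proposal is correct and follows essentially the same route as the paper: reduce to $\Fg^\phi\ne\Fp$ via Theorem~\ref{T3.5}, invoke Lemma~\ref{L4.5.1} with $b=1$ to get $a+j_0=0$, and then run an induction on the index to show $\phi(H_n)=0$ for all $n\ne j_0$. The only difference is organizational: by introducing $\psi(n)=(n-j_0)\phi(H_n)$ you package the paper's step-by-step computation of $\phi([x,H_{j_0-i-1}])$, $\phi([x,H_{j_0-i-2}])$, \dots\ into a single constant-coefficient linear recurrence, which is cleaner bookkeeping but not a different idea.
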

 
 \begin{proof}
 	Assume that $S^{\phi}=\{j_0\}$ and $a=-j_0$. It is easy to check that $\Fg^{\phi}=\Fg$, which follows that $W(\phi)$ is reducible.
 	
 	Next, we assume that $W(\phi)$ is reducible. Then $\Fp\subsetneq\Fg^{\phi}$. One can choose a nonzero $x\in \Fg^{\phi}\cap\Fg_0$. Assume that $x$ is shown as \eqref{F4.1}. Then $\mathrm{h}(x)=i$ By Lemma \ref{L4.5.1}, one has
 	\beqn
 	a+j_0=(1-b)\mathrm{h}(x)=(1-1)\mathrm{h}(x)=0.
 	\eeqn
 	We have
 	\beqn
 	\phi\([L_k,H_{j_0-k}]\)=(a+j_0)\la_i\phi\(H_{j_0}\)=0,~\forall k\in\ZZ.
 	\eeqn
 	Since
 	\beqn
 	0=\phi\([x,H_{j_0-(i+1)}]\)=(a+j_0-1)\la_i\phi(H_{j_0-1}),
 	\eeqn
 	we have $\phi(H_{j_0-1})=0$. By induction on $m$, it is easy to obtain that 
 	\beqn
 	\phi(H_{m})=0,~\forall m<j_0.
 	\eeqn
 	Thus, $S^{\phi}=\{j_0\}$. 
 \end{proof}

\begin{prop}\label{P4.8}
	Assume that $b\neq 1$. Let $\phi:\Fg\rightarrow\CC$ be a nonzero upper (resp. lower) finite Lie algbera homomorphism and $j_0$ the maximal (resp. miminal) element in $S^{\phi}$.  Then  $W(\phi)$ is reducible only if $\frac{j_0+a}{1-b}\in\ZZ$. Moreover, each irreducible quasi-Whittaker module of type $\phi$ is bland.
\end{prop}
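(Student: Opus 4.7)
The plan is to leverage Lemma \ref{L4.5.1} twice: once to pin down the head (or tail) of any nonzero element of $\Fg^{\phi}\cap\Fg_0$, and once more to rule out two linearly independent such elements. For both assertions the crucial input is that, under the hypothesis $b\neq 1$, the identity $a+j_0=(1-b)\h(x)$ of Lemma \ref{L4.5.1} uniquely determines $\h(x)$ as $\frac{j_0+a}{1-b}$.

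For the first assertion, I would assume $W(\phi)$ is reducible. By Theorem \ref{T3.5}, this means $\Fg^{\phi}\supsetneq\Fp$. Since $\Fg=\Fg_0\oplus\Fp$ as a vector space and $\Fp\subseteq\Fg^{\phi}$, there must exist a nonzero $x\in\Fg^{\phi}\cap\Fg_0$. Applying Lemma \ref{L4.5.1} gives $a+j_0=(1-b)\h(x)$ (resp.\ $(1-b)\mathrm{t}(x)$ in the lower finite case). Since $b\neq 1$ and $\h(x)\in\ZZ$, this immediately yields $\frac{j_0+a}{1-b}\in\ZZ$, as required.

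For the second assertion, by Theorem \ref{T3.10} it suffices to establish $\dim\Fg^{\phi}/\Fp\le 1$. Using the decomposition $\Fg^{\phi}=(\Fg^{\phi}\cap\Fg_0)\oplus\Fp$, this is equivalent to $\dim(\Fg^{\phi}\cap\Fg_0)\le 1$. Assume, toward a contradiction, that $x_1,x_2\in\Fg^{\phi}\cap\Fg_0$ are linearly independent nonzero elements. By Lemma \ref{L4.5.1} they share the same head $h_0:=\frac{j_0+a}{1-b}$. Writing $x_s=\la_s L_{h_0}+\text{higher terms}$ with $\la_s\neq 0$ for $s=1,2$, the combination $y=\la_2 x_1-\la_1 x_2$ lies in $\Fg^{\phi}\cap\Fg_0$ (as $\Fg^{\phi}$ is a subspace by Lemma \ref{L2.1}) and is nonzero by linear independence, yet its head is strictly greater than $h_0$. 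This contradicts Lemma \ref{L4.5.1}. Hence $\dim(\Fg^{\phi}\cap\Fg_0)\le 1$, and Theorem \ref{T3.10} concludes that every irreducible quasi-Whittaker module of type $\phi$ is bland.

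The only subtlety is the symmetry between the upper finite and lower finite cases, but they are entirely parallel: in the lower finite case one replaces $\h$ by $\mathrm{t}$ throughout, and the inequality $\h(y)>h_0$ by $\mathrm{t}(y)<h_0$, both of which again contradict Lemma \ref{L4.5.1}. No intricate computations arise; once Lemma \ref{L4.5.1} is in hand, the whole argument reduces to one step of linear algebra plus invocation of Theorems \ref{T3.5} and \ref{T3.10}.
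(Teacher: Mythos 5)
Your proposal is correct and follows essentially the same route as the paper: both argue via Lemma \ref{L4.5.1} that any nonzero $x\in\Fg^{\phi}\cap\Fg_0$ must have $\h(x)=\frac{j_0+a}{1-b}$ (giving the integrality condition), then observe that this forces $\dim(\Fg^{\phi}\cap\Fg_0)\le 1$, and finish by invoking Theorem \ref{T3.10}. The only difference is cosmetic: the paper asserts in one line that ``all nonzero elements of $\Fg^{\phi}\cap\Fg_0$ have the same head, hence $\Fg^{\phi}=\CC x\oplus\Fp$,'' whereas you spell out the cancellation argument showing that two linearly independent elements with the same head would produce a nonzero element of strictly larger head, contradicting the lemma. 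Your version is also marginally cleaner in establishing $\dim\Fg^{\phi}/\Fp\le 1$ without first splitting into the reducible and irreducible cases for $W(\phi)$.
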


\begin{proof}
	Assume that  $W(\phi)$ is reducible. Then $\Fg^{\phi}\cap \Fg_0\neq 0$. Let $x$ be a nonzero element in  $ \Fg^{\phi}\cap \Fg_0$. By lemma \ref{L4.5.1}, one has
	\beqn
	\frac{j_0+a}{1-b}=\mathrm{h}(x)\in\ZZ.
	\eeqn
	It follows that all the nonzero elements in  $ \Fg^{\phi}\cap \Fg_0$ have a same head. This implies that  $\Fg^{\phi}=\CC x\oplus\Fp$. By Theorem \ref{T3.10}, each irreducible irreducible quasi-Whittaker module of type $\phi$ is bland.
\end{proof}

  \begin{ex}
  	Let $\Fg=\W(0,-1)$ be the Takiff algebra of the centerless Virasoro algebra and $j_0\in\ZZ$  an odd number. Assume that  $\phi:\Fp\rightarrow\CC$ is a Lie algebra homomorphism with $\phi(H_{j_0})\neq 0$ and $\phi(H_j)=0$ for each $j>j_0$ (resp. $j<j_0$). Then $W(\phi)$ is irreducible.
  \end{ex}
  
  \begin{proof}
  	It is clear that $\phi$ is upper (resp. lower) finite. Since 
  	\beqn
  	\frac{0+j_0}{1-(-1)}=\frac{j_0}{2}\notin\ZZ,
  	\eeqn
  	by Proposition \ref{P4.8}, we have $W(\phi)$ is irreducible.
  \end{proof}

\subsection{Borel subalgebra $\mathcal{W}_1^{++}$ of the Witt algebra $\mathcal{W}_1$}
The following conclusion will recover the results in \cite{LGZ}.
Let $\mathfrak{g}= \mathcal{W}_1^{++}=\text{Span}_{\mathbb{C}}\{d_i \mid \ i\in \mathbb{Z}_+\}$ be the Borel subalgebra of the Witt algebra $\W_1 = \text{Span}_{\mathbb{C}}\{d_i \mid \ i\in \mathbb{Z}\}$. The Lie brackets in $\mathfrak{g}$ are given by 
$$[d_m,d_n]=(m-n)d_{m+n}, \ \text{for any} \ m,n \in \mathbb{Z}_+.$$
Fix a $k\in \mathbb{N}$, let $\mathfrak{p}_{k}= \text{Span}_{\mathbb{C}}\{d_i \mid \ i\in \mathbb{Z}_{\geq k}\}$ be the ideal of the Lie algebra $\mathfrak{g}$. Let $\phi_k: \Fp_k\rightarrow \CC$ be a nonzero Lie algebra homomorphism, then we have $\phi(\mathfrak{p}_{2k+1})=0$ from  $[\mathfrak{p}_k,\mathfrak{p}_k]=\mathfrak{p}_{2k+1}$.

\begin{thm}
	Let $\phi_k: \Fp_k\rightarrow \CC$ be a nonzero Lie algebra homomorphism. Then the universal quasi-Whittaker module $W(\phi)$ over $\W_1^{++}$ is irreducible if and only if $\phi_k(d_{2k-1})\neq0$ or $\phi_k(d_{2k})\neq0$.
\end{thm}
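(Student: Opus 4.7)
The plan is to reduce everything to Theorem \ref{T3.5}, which tells us that $W(\phi_k)$ is irreducible if and only if $\Fg^{\phi_k}=\Fp_k$. Since $\Fp_k\subseteq\Fg^{\phi_k}$ always, I need to characterize when no nonzero $x=\sum_{i=0}^{k-1}c_id_i$ satisfies $\phi_k([x,d_m])=0$ for all $m\ge k$. Using $[d_i,d_m]=(i-m)d_{i+m}$ and recalling that $\phi_k$ vanishes on $\Fp_{2k+1}=[\Fp_k,\Fp_k]$, the nontrivial constraints on $x$ reduce to the finite linear system
\begin{equation*}
\sum_{i=0}^{\min(k-1,\,2k-m)}c_i(i-m)\phi_k(d_{i+m})=0,\qquad m=k,k+1,\ldots,2k,
\end{equation*}
since for $m\ge 2k+1$ every $\phi_k(d_{i+m})$ already vanishes.

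For the ``only if'' direction, I would suppose $\phi_k(d_{2k-1})=\phi_k(d_{2k})=0$. Then $\phi_k(d_j)=0$ for every $j\ge 2k-1$, so for any $m\ge k$ we have $k-1+m\ge 2k-1$, hence $\phi_k([d_{k-1},d_m])=(k-1-m)\phi_k(d_{k-1+m})=0$. Thus $d_{k-1}\in\Fg^{\phi_k}\setminus\Fp_k$, which by Theorem \ref{T3.5} forces $W(\phi_k)$ to be reducible.

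For the ``if'' direction, I would split into two subcases and solve the system by downward induction in $m$. If $\phi_k(d_{2k})\ne 0$, the equation $m=2k$ reads $-2kc_0\phi_k(d_{2k})=0$, forcing $c_0=0$; then inductively, equation $m=2k-j$ combined with $c_0=\cdots=c_{j-1}=0$ collapses (via the vanishing of $\phi_k$ on $\Fp_{2k+1}$) to the single pivot term $c_j(2j-2k)\phi_k(d_{2k})=0$, which gives $c_j=0$ since $j<k$ makes $2j-2k\ne 0$. If instead $\phi_k(d_{2k})=0$ and $\phi_k(d_{2k-1})\ne 0$, the $m=2k$ equation becomes vacuous, but the $m=2k-1$ equation gives $-(2k-1)c_0\phi_k(d_{2k-1})=0$, so $c_0=0$; then by induction the equation $m=2k-1-j$ together with $c_0=\cdots=c_{j-1}=0$ reduces to the pivot $c_j(2j+1-2k)\phi_k(d_{2k-1})=0$, and $2j+1-2k<0$ for $0\le j\le k-1$ forces $c_j=0$.

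I expect the main obstacle to be bookkeeping rather than conceptual: one has to check carefully that at each step of the downward induction all the terms involving previously unkilled coefficients either disappear because of the inductive hypothesis, or because $\phi_k$ annihilates $d_{2k}$ (in the second case) or $d_j$ for $j\ge 2k+1$, leaving exactly one nonvanishing pivot term. The endpoint $j=k-1$, where the range of $i$ gets clipped by $\min(k-1,2k-m)$, deserves special attention, as do the small cases $k=1,2$ where several of the intermediate equations disappear; a direct check shows that in each such degenerate situation the argument still produces $c_j=0$ for every $j\in\{0,1,\ldots,k-1\}$, so $\Fg^{\phi_k}=\Fp_k$ and Theorem \ref{T3.5} yields irreducibility.
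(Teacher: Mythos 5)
Your proposal is correct and follows essentially the same route as the paper: both reduce to checking $\Fg^{\phi_k}=\Fp_k$ via Theorem~\ref{T3.5}, observe that $d_{k-1}\in\Fg^{\phi_k}$ when $\phi_k(d_{2k-1})=\phi_k(d_{2k})=0$, and in the converse direction run a downward induction on the brackets $\phi_k([x,d_m])=0$ for $m=2k,2k-1,\dots,k$ to kill the coefficients $c_0,\dots,c_{k-1}$ one at a time, using $\phi_k(\Fp_{2k+1})=0$ to isolate a single pivot term. The only cosmetic difference is that the paper first shows $c_0=0$ via a general contradiction argument ($c_0\ne 0$ would force $\phi_k=0$) before splitting into the two cases, whereas you derive $c_0=0$ inside each case from the case hypothesis.
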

\begin{proof}
Suppose that $\phi_k(d_{2k-1})=0$ and $\phi_k(d_{2k})=0$. 
It is easy to check that $d_{k-1}\in\mathfrak{g}^{\phi_k}$. 
Therefore, by Theorem \ref{T3.5}, the $\mathfrak{g}$-module $W(\phi_k)$ is reducible.

Next, choose 
\[
x = \ell_0d_0 + \ell_1d_1 + \cdots + \ell_{k-1}d_{k-1} \in \mathfrak{g}^{\phi_k}.
\]
Assume $\ell_0 \neq 0$. By $\phi_k([x,d_{2k}]) = 0$, we derive $\phi_k(d_{2k}) = 0$. 
Similarly, by $\phi_k([x,d_{2k-1}]) = 0$, we obtain $\phi_k(d_{2k-1}) = 0$. 
By induction, $\phi_k(\mathfrak{p}_k) = 0$, which contradicts the non-vanishing of $\phi_k$. 
Thus, $\ell_0 = 0$, i.e.,
\[
x = \ell_1d_1 + \cdots + \ell_{k-1}d_{k-1} \in \mathfrak{g}^{\phi_k}.
\]

\textbf{Case 1:} $\phi_k(d_{2k}) \neq 0$.

From $\phi_k([x, d_{2k-1}]) = 0$, we get $\ell_1 = 0$. 
By induction on $\phi_k([x, d_{2k-i}]) = 0$ for $i \geq 1$, 
we successively find $\ell_i = 0$ for $1 \leq i \leq k-1$, so $x = 0$.

\textbf{Case 2:} $\phi_k(d_{2k}) = 0$ and $\phi_k(d_{2k-1}) \neq 0$.

Similarly, considering $\phi_k([x, d_{2k-1-i}]) = 0$, induction yields $\ell_i = 0$ 
for $1 \leq i \leq k-1$, hence $x = 0$.

By Theorem \ref{T3.5}, the $\mathfrak{g}$-module $W(\phi)$ is irreducible.
\end{proof}

\subsection{Irreducible smooth modules over $\mathcal{W}_n^+ $ with height 2 }

Let $n\geq2$. Following \cite{LCLLZZ}, Let $A_{n}^{+}=\mathbb{C}[t_{1},t_{2},\dots,t_{n}]$ be the polynomial algebra in variables $t_1,t_2,\dots,t_n$  over $\mathbb{C}$ and let $\W_{n}^{+}$ be the derivation Lie algebra of $A_n^+$.
For $1\leq i\le n$, we denote by $\partial_i=\partial/\partial t_i$,
the partial derivation operator with respect to $t_i$.
For $\alpha=(\alpha_1,\alpha_2,\dots,\alpha_n)\in\mathbb{Z}_+^n$, set
\[t^\alpha=t_1^{\alpha_1}t_2^{\alpha_2}\cdots t_n^{\alpha_n}\ \text{and}\
\partial^\alpha=\partial_1^{\alpha_1}\partial_2^{\alpha_2}\cdots \partial_n^{\alpha_n}  .\]
We have
\[\W_{n}^{+}=\text{Span}_{\mathbb{C}}\{t^\alpha\partial_i\mid \alpha\in\mathbb{Z}_{+}^n,\ i=1,2,\dots,n\}\]
and the Lie brackets in $\W_{n}^{+}$ are given by
\begin{align*}\label{lie-bracket}
	[t^{\alpha}\partial_{i},t^{\beta}\partial_{j}]=t^{\alpha}\partial_{i}(t^{\beta})
	\partial_{j}-t^{\beta}\partial_{j}(t^{\alpha})\partial_{i}.
\end{align*}
In this section, we write $\mathfrak{g}=\W_n^+$ for convenience.
 
Let $\mathfrak{g}_{k}=0$ for $k<-1$ and
\[\mathfrak{g}_k=\text{Span}_{\mathbb{C}}\{t^{\alpha}\partial_{i}\mid \alpha\in\mathbb{Z}_{+}^n,\ |\alpha|=k+1,\ i=1,2,\dots,n\}\]
for $k\geq-1$. This gives a natural $\mathbb{Z}$-gradation on $\mathfrak{g}$:
\[\mathfrak{g}=\bigoplus_{k\in\mathbb{Z}}\mathfrak{g}_k.\]
It is clear that $\mathfrak{g}_{\geq r}$ is a subalgebra of $\mathfrak{g}$ for all $r\in\mathbb{Z}$. We abbreviate $\mathfrak{a}=\mathfrak{g}_{\geq 0}$ and $\mathfrak{p}=\mathfrak{g}_{\geq 1}$, then
\beqn
\mathfrak{a}=\mathfrak{g}_0\ltimes \mathfrak{p}.
\eeqn
Moreover, $[\mathfrak{p},\mathfrak{p}]=\mathfrak{g}_{\geq2}.$ Let $\phi:\mathfrak{p}\rightarrow\mathbb{C}$ be a nonzero Lie algbera homomorphism, i.e. a linear map such that
\beqn
\phi\(\mathfrak{g}_1\)\neq 0,~\text{and}~\phi\(\mathfrak{g}_{\geq2}\)=0.
\eeqn

\begin{Def}
	A $\mathfrak{g}$-module (or $\mathfrak{a}$-module) $V$ is called a {\bf smooth} module
	if for any $v\in V$ there exists $n\in\mathbb{Z}_{+}$ such that $\mathfrak{g}_{i}.v=0$ for all $i\geq n$.
\end{Def}

The concept was first introduced  by D. Kazhdan and G. Lusztig  for   affine Kac-Moody algebras \cite{KL1, KL2} in 1993.

Let $V$ be a smooth $\mathfrak{g}$-module (or $\mathfrak{a}$-module). For $r\in\mathbb{Z}_{+}$, set
\begin{align*}
	V^{(r)}=\{v\in V\mid \mathfrak{g}_{\geq r}.v=0\}.
\end{align*}
Then $V^{(r)}$ is a subspace of $V$ and
\[V^{(0)}\subseteq V^{(1)}\subseteq V^{(2)}\subseteq\cdots.\]
Define $\ell_{V}=\mathrm{min}\{r\in\mathbb{Z}_{+}\mid V^{(r)}\neq 0\}$ which is called the {\bf height} of $V$. Next, we will apply the theories in Sections 2 and 3 to the Lie algebra $\mathfrak{a}$ and its ideal $\mathfrak{p}$, rather than to $(\mathfrak{g}, \mathfrak{p})$.
\begin{prop}
	Keep the notations above. Let $\phi:\mathfrak{p}\rightarrow\mathbb{C}$ be a Lie algbera homomorphism such that 
	\beqn
	\phi(t_i^2\partial_i)\neq 0,~\forall i=1,2,\cdots,n
	\eeqn
	and
	\beqn
	\phi(t^\a\partial_i)=0,~\forall \a\in\mathbb{Z}_+^n \text{~with~} |\a|=2~\text{and}~\a_i\neq 2.
	\eeqn
	Then $W(\phi)$ is irreducible. In particular, $V=\mathrm{Ind}_{\mathfrak{a}}^{\mathfrak{g}}W(\phi)$ is an irreducible smooth $\mathfrak{g}$-module of height $2$.
\end{prop}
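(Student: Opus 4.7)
The plan is to apply Theorem \ref{T3.5} to the pair $(\mathfrak{a},\mathfrak{p})$, which reduces everything to showing $\mathfrak{a}^{\phi}=\mathfrak{p}$; since $\mathfrak{a}=\mathfrak{g}_{0}\oplus\mathfrak{p}$ and $\mathfrak{p}\subseteq\mathfrak{a}^{\phi}$, this amounts to $\mathfrak{a}^{\phi}\cap\mathfrak{g}_{0}=0$. For an arbitrary $x=\sum_{i,j}c_{ij}t_{i}\partial_{j}\in\mathfrak{g}_{0}$, I would test the condition $\phi([x,p])=0$ with $p=t_{a}^{2}\partial_{b}\in\mathfrak{g}_{1}$ for all pairs $(a,b)$. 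A direct Leibniz computation gives
\[
[t_{i}\partial_{j},t_{a}^{2}\partial_{b}]=2\delta_{ja}t_{i}t_{a}\partial_{b}-\delta_{bi}t_{a}^{2}\partial_{j},
\]
and since by hypothesis $\phi(t_{c}t_{d}\partial_{e})=0$ unless $c=d=e$, applying $\phi$ collapses the entire system to $(2c_{aa}\delta_{ab}-c_{ba})\phi(t_{a}^{2}\partial_{a})=0$. Because $\phi(t_{a}^{2}\partial_{a})\neq 0$ for every $a$, the case $b=a$ forces $c_{aa}=0$ and the case $b\neq a$ then forces $c_{ba}=0$; hence $x=0$, and Theorem \ref{T3.5} yields the irreducibility of $W(\phi)$ over $\mathfrak{a}$.

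For the second assertion, transitivity of induction gives $V\cong\Ind_{\mathfrak{p}}^{\mathfrak{g}}\CC w_{\phi}$, and PBW identifies $V\cong U(\mathfrak{g}_{-1})\otimes W(\phi)$ as vector spaces. Smoothness is then routine: for $v=\partial^{\gamma}uw_{\phi}$ with $u\in U(\mathfrak{g}_{0})$ and $y\in\mathfrak{g}_{k}$ with $k\geq|\gamma|+2$, commuting $y$ past $\partial^{\gamma}$ lowers the grade by at most $|\gamma|$ and commuting past $u$ preserves it, so the resulting operators all lie in $\mathfrak{g}_{\geq 2}$ and annihilate $w_{\phi}$ since $\phi|_{\mathfrak{g}_{\geq 2}}=0$. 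The vector $w_{\phi}$ lies in $V^{(2)}$, so the height is at most $2$; to see that it equals $2$, note that any $v\in V^{(1)}$ would be a $\mathfrak{p}$-eigenvector of type $0$, but the PBW leading-term argument of Proposition \ref{P2.4} adapts to the basis $\{\partial^{\gamma}x^{\alpha}w_{\phi}\}$ of $V$ and forces every nonzero $\mathfrak{p}$-eigenvector in $V$ to be of type $\phi\neq 0$, so $V^{(1)}=0$.

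The hardest step will be the irreducibility of $V$ as a $\mathfrak{g}$-module, since $\mathfrak{p}$ is not an ideal of $\mathfrak{g}$ and Theorem \ref{T3.5} does not apply to the pair $(\mathfrak{g},\mathfrak{p})$ directly. My strategy is a degree-reduction argument: given a nonzero submodule $M\subseteq V$, I would choose $v=\sum_{\gamma}\partial^{\gamma}w_{\gamma}\in M$ of minimal $\partial$-degree $d$ and use the identity
\[
(p-\phi(p))v=\sum_{|\gamma|=d}\partial^{\gamma}(p-\phi(p))w_{\gamma}+(\text{strictly lower }\partial\text{-degree})
\]
for $p\in\mathfrak{p}$. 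Assuming $d>0$, the nonzero $\mathfrak{p}$-invariant subspace of $W(\phi)$ generated by $\{w_{\gamma}\}_{|\gamma|=d}$ contains a scalar multiple of $w_{\phi}$ by Proposition \ref{P3.2} together with the already-established $W(\phi)_{\phi}=\CC w_{\phi}$, so iterated application of operators $p-\phi(p)$ will eventually collapse each top coefficient to a multiple of $w_{\phi}$; one further application then kills the top part and produces a nonzero element of $M$ with $\partial$-degree strictly less than $d$, contradicting minimality. Hence $d=0$, so $M\cap W(\phi)\neq 0$, and the $\mathfrak{a}$-irreducibility of $W(\phi)$ forces $w_{\phi}\in M$, whence $M=V$. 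The delicate point is arranging the iteration so that all top coefficients reduce simultaneously while the overall vector remains nonzero, which requires careful use of the $\mathfrak{p}$-module structure of $W(\phi)$.
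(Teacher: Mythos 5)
Your verification that $\mathfrak{a}^{\phi}=\mathfrak{p}$ is correct and is essentially the paper's argument: the paper packages the same computation as a rank condition via Corollary \ref{C3.6} (checking that the matrix $D=\bigl(\phi([t_i\partial_j,t_p^2\partial_q])\bigr)$ is a generalized permutation matrix with nonzero entries exactly when $j=p$, $i=q$), while you solve the linear system directly; these are equivalent and both give $\mathfrak{a}^{\phi}\cap\mathfrak{g}_0=0$, hence $W(\phi)$ irreducible over $\mathfrak{a}$ by Theorem \ref{T3.5}. The height-$2$ argument (any $v\in V^{(1)}$ would force a degree-leading $w_\gamma$ to be a $\mathfrak{p}$-eigenvector of type $0$ in $W(\phi)$, contradicting Proposition \ref{P2.4}) is also sound.

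The real divergence is in the irreducibility of $V=\mathrm{Ind}_{\mathfrak{a}}^{\mathfrak{g}}W(\phi)$ as a $\mathfrak{g}$-module. The paper does not prove this at all: it invokes an external black box, Theorem~4.7 of the submitted paper \cite{LCLLZZ}, which asserts that inducing an irreducible smooth $\mathfrak{a}$-module of height $\geq 1$ up to $\mathfrak{g}=\W_n^+$ produces an irreducible smooth $\mathfrak{g}$-module. Your attempt to reconstruct that step from scratch has a genuine gap, one you yourself flag. The problem is in the sentence ``iterated application of operators $p-\phi(p)$ will eventually collapse each top coefficient to a multiple of $w_{\phi}$.'' Proposition \ref{P3.2} guarantees that the $\mathfrak{p}$-submodule generated by a \emph{single} nonzero $w_\gamma$ meets $\CC w_\phi$, but the same word in the operators $p_1-\phi(p_1),\dots,p_s-\phi(p_s)$ acts simultaneously on all the $w_\gamma$ with $|\gamma|=d$, and there is no reason one choice of word reduces \emph{every} $w_\gamma$ to a scalar multiple of $w_\phi$ at the same time; different $w_\gamma$'s may require incompatible reduction paths. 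Moreover, even after such a collapse you need the resulting element of $M$ to be \emph{nonzero} with strictly smaller $\partial$-degree; the last application of $p-\phi(p)$ kills the top part but could in principle annihilate the lower part as well, and this non-vanishing needs a separate argument (typically a careful total order on the pairs $(\gamma,\mathrm{deg}\,w_\gamma)$ and an explicit choice of $p$). These are exactly the delicate points that \cite{LCLLZZ} handles and that the paper deliberately outsources; your sketch identifies the right strategy but does not close it.
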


\begin{proof}
	Denote by
	\beqn
	I=\{(1,1)<(1,2)<\cdots<(1,n)<\cdots<(n,1)<(n,2)<\cdots<(n,n)\}.
	\eeqn
	It is elementary to verify that  $\phi\([t_i\partial_j,t_p^2\partial_q]\)\neq0$ if and only if $j=p$ and $i=q$ for $1\leq i,j,p,q\leq n$. It implies that the matrix
	\beqn
	D=\(\phi([t_i\partial_j,t_p^2\partial_q])\)_{(i,j)\in I,~(p,q)\in I}
	\eeqn
	has exactly one non-zero element in each row and each column. In particular,
	$\det D\neq 0$, which provides a nonzero minor of $A_{\phi}$ of order $n^2$.  It follows that $\mathfrak{a}^{\phi}=\mathfrak{p}$. Therefore, by Theorem \ref{T3.5}, $W(\phi)$ is an irreducible smooth $\mathfrak{a}$-module of height $2$. Combining the conclusion of \cite[Theorem 4.7]{LCLLZZ} again, it can be deduced that $V$ is an irreducible smooth $\mathfrak{g}$-module of height $2$.
\end{proof}

\section* {Acknowledgement} 

W. Gao is partially supported by  Natural Science Foundation of Henan (No. 232300420351), Natural Science Foundation of China (No. 12471024) and Nanhu Scholars Program of XYNU (No. 012112), S. Liu is partially supported by Nanhu Scholars Program of XYNU (No. 012111), K. Zhao is  partially supported
by   NSERC (311907-2020), Y. Zhao is partially supported by National Natural Science Foundation of China (Grant No.12301040), and Nanhu Scholars Program of XYNU (No. 2021030).


	\
	
	Cunguang Cheng, School of Mathematical Sciences, Hebei Normal University, Shijiazhuang 050016, P. R. China. Email address: chengcg2024@163.com 
	
	\vskip 5pt
	
	Wenting Gao, School of Mathematics and Statistics,
	Xinyang Normal University, Xinyang 464000, P. R. China. Email address: gaowentingxy@163.com
	
	\vskip 5pt
	
	Shiyuan Liu, School of Mathematics and Statistics,
	Xinyang Normal University, Xinyang 464000, P. R. China. Email address: liushiyuanxy@163.com
	
	\vskip 5pt
	
	Kaiming Zhao,   Department of Mathematics, Wilfrid Laurier University, Waterloo, ON, Canada N2L3C5. Email address: kzhao@wlu.ca
	
	\vskip 5pt

	Yueqiang Zhao, School of Mathematics and Statistics,
	Xinyang Normal University, Xinyang 464000, P. R. China. Email address: yueqiangzhao@163.com
	
\end{document}